\numberwithin{equation}{section}
\theoremstyle{plain}
\newtheorem{theorem}{Theorem}[section]
\newtheorem{lemma}[theorem]{Lemma}
\newtheorem*{de-lemma}{Lemma}
\newtheorem{proposition}[theorem]{Proposition}
\theoremstyle{remark}
\theoremstyle{definition}
\newtheorem{remark}{Remark}
\DeclareMathOperator{\supp}{supp}
\newcommand{\dd}{\mathrm{d}}
\newcommand{\R}{\mathbb{R}}
\newcommand{\SF}{\mathbb{S}}
\newcommand{\n}{\textbf{\em n}}
\providecommand{\MR}{\relax\ifhmode\unskip\space\fi MR }
\providecommand{\href}[2]{#2}
\begin{document}

\title[A comparison principle]{A comparison principle for vector valued minimizers of semilinear elliptic energy, with application to dead cores}

\author{Panayotis Smyrnelis} \address[P.~ Smyrnelis]{Basque Center for Applied Mathematics, 48009 Bilbao, Spain}
\email[P. ~Smyrnelis]{psmyrnelis@bcamath.org}

\begin{abstract}

We establish a comparison principle providing accurate upper bounds for the modulus of vector valued minimizers of an energy functional, associated when the potential is smooth, to elliptic gradient systems. Our assumptions are very mild: we assume that the potential is lower semicontinuous, and satisfies a monotonicity condition in a neighbourhood of its minimum. As a consequence, we give a sufficient condition for the existence of dead core regions, where the minimizer is equal to one of the minima of the potential.\\

\textbf{MSC2020}: Primary 35B51; 35J50; Secondary 35B50.
\textbf{Keywords}: comparison principle, phase transition, dead core, vector valued minimizer, semilinear elliptic energy.

\end{abstract}

\maketitle

\section{Introduction}

The scope of this paper is to establish a general comparison principle providing accurate upper bounds for the modulus of \emph{vector valued minimizers} of the energy functional
\begin{equation}\label{ene}
E_\omega(v):=\int_{\omega} \Big[\frac{1}{2}|\nabla v(x)|^2+W(v(x))\Big]\dd x, \ v\in W^{1,2}(\omega;\R^m), \ \omega \subset \R^n, \ n,m\geq 1,
\end{equation}
where $W:\R^m\to [0,\infty)$ is a nonnegative, lower semicontinuous potential (cf. Theorem \ref{th1} below). Concerning the behaviour of $W$ in a neighbourhood of one of its zero, supposed to be located at the origin, we shall only make two basic monotonicity assumptions (cf. $\mathbf{H_3}$ below). Namely, that in a neighbourhood of $0$:
\begin{itemize}\label{hip}
 \item[($m1$)] $W_{\mathrm{rad}}(|u|)\leq W(u) $, where $W_{\mathrm{rad}}:[0,q]\to[0,\infty)$ is a nondecreasing, lower semicontinuous function\footnote{We shall see in Theorem \ref{th1} that the upper bound obtained for the modulus of the minimizer, only depends on the profile of the function $W_{\mathrm{rad}}$.},
\item[($m2$)] and $u \mapsto W(u)-W_{\mathrm{rad}}(|u|)$ is nondecreasing on the rays emanating from the origin.
\end{itemize}
Thus, our result applies to a large class of potentials, including for instance the interesting particular case of the characteristic function of $\R^m\setminus\{0\}$. Phase transition problems involving nonsmooth potentials are often considered in the literature. We mention in particular the work \cite{alt} on free boundaries; the density estimates obtained in \cite{caf} (resp. \cite{alikakos2}) in the scalar (resp. vector) case; the properties of minimal surfaces and minimizers studied in \cite{savin}; the heteroclinic orbit problem examined in \cite{smy}; the structure of minimizers described in \cite{dbr} in the one dimensional case $n=m=1$. Although the potential $W$ may be a very rough function, we recall that the minimizers of \eqref{ene} are continuous maps (cf. Lemma \ref{lem2}).

Comparison principles are useful in phase transition problems, to study the convergence of a solution to the minima of the potential. The most typical situation occurs (cf. \cite[Lemma 4.4]{book}) when $W:\R^m\to [0,\infty)$ is a smooth potential  such that   
\begin{equation}
W\geq 0, W(0)=0, \text{and }\nabla W(u)\cdot u \geq c |u|^2, \text{ holds for } |u|\leq q
\end{equation}
(i.e. the minimum $0$ is nondegenerate), and $u\in C^2(\overline{\Omega};\R^m)$ is a smooth solution to $\Delta u (x)=\nabla W(u(x))$ in $\Omega\subset\R^n$, such that $|u|\leq q$ holds in $\Omega$. Then, in view of the inequality
\begin{equation}
\Delta |u|^2(x) \geq 2 \nabla W(u(x))\cdot u(x)\geq 2c |u(x)|^2, \forall x\in \Omega,
\end{equation}
the maximum principle implies that
\begin{equation}
|u(x)|^2 \leq \Phi(x), \forall x\in \Omega,
\end{equation}
where $\Phi:\Omega\to \R$ is the solution of the problem\footnote{We refer again to \cite[Appendix A]{book} for the decay properties of $\Phi$.}
\begin{equation}
\Delta \Phi= 2c \Phi \text{ in } \Omega, \text{ and } \Phi= q^2 \text{ on }\partial\Omega.
\end{equation}

On the other hand, we would also like to recall a classical result (cf. \cite[Theorem 7.2]{pucci1}), on the existence of \emph{dead core solutions} in the scalar case.
Let $W\in C^1([0,q];\R)$ be a potential defined on the interval $[0,q]$, and assume also that
\begin{subequations}
\begin{equation}\label{ww1}
W \text{ is convex},
\end{equation}
\begin{equation}\label{ww2}
W(0)=W'(0)=0,\text{ and }  W'>0 \text{ on } (0,q],
\end{equation}
\begin{equation}\label{lon1}
\int_0^q\frac{\dd s}{\sqrt{W(s)}}<\infty.
\end{equation}
\end{subequations}
Then, in every ball $B_R:=\{x \in \R^n: |x|<R\}$, the equation
\begin{equation}\label{sca}
\Delta u(x) =W'(u),  x \in B_R,
\end{equation}
admits a nonnegative \emph{dead core solution}, that is, a solution of \eqref{sca} satisfying
\begin{subequations}
\begin{equation}
u\equiv 0 \text{ in an open set } \omega \text{ such that } \overline\omega\subset B_R,
\end{equation}
\begin{equation}
u>0 \text{ in } B_R\setminus \overline\omega.
\end{equation}
\end{subequations}
Actually, the condition \eqref{lon1} is necessary and sufficient for the existence of dead cores. Indeed, the conditions \eqref{ww1}, \eqref{ww2}, and 
\begin{equation}\label{lon2}
\int_0^q\frac{\dd s}{\sqrt{W(s)}}=\infty,
\end{equation}
ensure the validity of the \emph{strong maximum principle} (cf. \cite[Theorem 1.1]{pucci1}): a nonnegative solution $u$ of $\Delta u\leq W'(u)$ defined in a connected open set $\Omega\subset\R^n$, is either positive or identically zero on $\Omega$.
The sufficiency of \eqref{lon2} for the strong maximum principle to hold is due to V\'azquez \cite{vaz}, while necessity is due to Benilan et al. \cite{benilan}. We refer to \cite{pucci1,pucci2,pucci3} and the references therein, for general statements of maximum and comparison principles, as well as for the theory of dead core solutions.

As far as \emph{vector valued minimizers} $u$ of \eqref{ene} are concerned (with $W$ a nonnegative, lower semicontinuous potential satisfying the monotonicity assumptions $(m1)$-$(m2)$), we shall see in Theorem \ref{th2} below, that the condition
\begin{equation}\label{lon3}
\int_0^q\frac{\dd s}{\sqrt{W_{\mathrm{rad}}(s)}}<\infty,
\end{equation}
still implies the existence of \emph{dead core regions}, where the minimizers $u$ vanish\footnote{In particular, whenever the function $W_{\mathrm{rad}}$ is discontinuous at $0$, dead core regions appear.}. However, in our \emph{variational setting}, the convexity of $W$ considered in \eqref{ww1} for \emph{solutions} of \eqref{sca} is not anymore required. Theorem \ref{th2} follows from the bound obtained in Theorem \ref{th1}, and also provides a general computation of the distance of the dead core from the boundary of the domain. Estimates for the dead core have initially been established e.g. in \cite{aris,bandle,diaz,fried,sperb}, and we refer to \cite[Section 8.4.]{pucci3} for further explicit examples. Finally, we point out that in the scalar case ($m=1$), if instead of \eqref{lon3} we assume that   
\begin{equation}\label{lon4}
\int_0^q\frac{\dd s}{\sqrt{W(s)}}=\infty \text{ (with $W(u)\equiv W_{\mathrm{rad}}(|u|)$)} 
\end{equation}
holds, then the existence of dead cores is partially ruled out by the following variational version of the maximum principle (cf. Proposition \ref{prop2} below): a minimizer $u:\R^n\supset\Omega \to[-q,q]$ of \eqref{ene} that is positive at the boundary of a subdomain $\omega\subset\subset \Omega$, is also positive on $\overline{\omega}$.

\section{Main results}
Now, we shall state more precisely our assumptions and main results. Let us assume that $B_q\subset \R^m$ is the open ball of radius $q>0$ centered at the origin, and that $W:\overline{B_q}\to [0,\infty)$ is a potential such that
\begin{itemize}\label{hypw}
\item[$\mathbf{H_1}$]  $W\geq 0$ and $W(0)=0$,
\item[$\mathbf{H_2}$]  $W$ is lower semicontinuous and bounded on $\overline{B_q}$,
\item[$\mathbf{H_3}$]  $W(u)=W_{\mathrm{rad}}(|u|)+W_0(u)$, 
with $W_{\mathrm{rad}}:[0,q]\to[0,\infty)$ a nondecreasing, lower semicontinuous function, and $W_0:\overline{B_q}\to[0,\infty)$ a function such that $W_0(r\xi)\leq W_0(s\xi)$ holds for every $0\leq r\leq s\leq q$, and every unit vector $\xi\in\R^m$.
\end{itemize}
Our comparison principle applies to maps $u\in W^{1,2}_{\mathrm{loc}}(\Omega;\R^m)$ defined in an open set $\Omega\subset\R^m$, such that
\begin{equation}\label{qbound}
\|u\|_{L^\infty(\Omega;\R^m)}\leq q,
\end{equation}
and $u$ is a \emph{local minimizer} of the energy functional \eqref{ene}, 
for perturbations satisfying \eqref{qbound}. That is, for every bounded open set $\omega$ with Lipschitz boundary, such that $\overline \omega\subset\Omega$, and every perturbation $v=u+\xi$ such that $\xi \in W^{1,2}_0(\omega;\R^m)$ and 
$\|v\|_{L^\infty(\omega;\R^m)}\leq q$, we assume that
\begin{equation}\label{enemin}
E_\omega(u)\leq E_\omega(v).
\end{equation}
For instance, if $\Omega$ is a smooth domain, and if we extend $W$ on the whole space $\R^m$ by setting
\begin{equation*}
\tilde W(u)=\begin{cases}
W(u) &\text{when } |u|\leq q,\\
W(\frac{qu}{|u|}) &\text{when } |u|\geq q,
\end{cases}
\end{equation*}
one can check that assumptions \eqref{qbound} and \eqref{enemin} hold, for every minimizer $u$ of 
\begin{equation*}
\tilde E_\Omega(v):=\int_{\Omega} \Big[\frac{1}{2}|\nabla v(x)|^2+\tilde W(v(x))\Big]\dd x,
\end{equation*}
in the class of maps $v\in W^{1,2}(\Omega;\R^m)$ satifying the boundary condition
\begin{equation*}
v=\phi \text{ on } \partial\Omega,  \text{ with } \phi \in W^{1,2}(\Omega;\R^m), \text{ and }\|\phi\|_{L^\infty(\Omega;\R^m)}\leq q.
\end{equation*}
In the following Theorem \ref{th1}, we shall establish an upper bound for the modulus of the local minimizer $u$. Our comparison function $\overline{\Psi}_R$ is defined in 

\begin{proposition}\label{prop1}
We assume that $W_{\mathrm{rad}}:[0,q]\to[0,\infty)$ is a bounded, nondecreasing, lower semicontinuous function. Let $B_R\subset\R^n$ be the open ball of radius $R>0$ centered at the origin, let
\begin{equation*}
\tilde W_{\mathrm{rad}}(r)=\begin{cases}
W_{\mathrm{rad}}(|r|) &\text{when } |r|\leq q,\\
W_{\mathrm{rad}}(q) &\text{when } |r|\geq q,
\end{cases}
\end{equation*}
and let 
\begin{equation*}
 J_{B_R}(h):=\int_{B_R} \Big[\frac{1}{2}|\nabla h(x)|^2+\tilde W_{\mathrm{rad}}(h(x))\Big]\dd x.
\end{equation*}
Then, there exists a unique minimizer $\overline{\Psi}_R$ (resp. $\underline{\Psi}_R$) of $J_{B_R}$ in the class $A_{R}:=\{h\in W^{1,2}(\Omega;\R): h=q \text{ on } \partial B_R\}$, satisfying the following properties:
\begin{itemize}
\item[(i)] $\overline{\Psi}_R$ (resp. $\underline{\Psi}_R$) is radial (i.e. $\overline{\Psi}_R(x)=\overline{\Psi}_{R,\mathrm{rad}}(|x|)$, $\underline{\Psi}_R(x)=\underline{\Psi}_{R,\mathrm{rad}}(|x|)$, $\forall x\in \overline{ B_R}$), and continuous on $\overline{B_R}$,
\item[(ii)] the function $\overline{\Psi}_{R,\mathrm{rad}}$ (resp. $\underline{\Psi}_{R,\mathrm{rad}}$) is nondecreasing on the interval $[0,R]$,
\item[(iii)] if $\psi_R$ is another minimizer of $J_{B_R}$ in the class $A_{R}$, then we have $\underline{\Psi}_R\leq \psi_R\leq\overline{\Psi}_R$ in $\overline{B_R}$.
\end{itemize}
\end{proposition}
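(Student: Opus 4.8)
The plan is to prove Proposition \ref{prop1} in several stages: existence of a minimizer, rearrangement to obtain a radial monotone minimizer, the ODE characterization leading to a comparison between arbitrary minimizers, and finally the construction of the maximal and minimal minimizers $\overline{\Psi}_R$ and $\underline{\Psi}_R$.

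First I would establish existence via the direct method. The functional $J_{B_R}$ is coercive on $W^{1,2}(B_R;\R)$ (the gradient term controls the norm, and $\tilde W_{\mathrm{rad}}$ is bounded), and it is weakly lower semicontinuous: the Dirichlet term by convexity, and the potential term because $\tilde W_{\mathrm{rad}}$ is nonnegative and lower semicontinuous (so by Fatou along a subsequence converging a.e., after passing to the weak limit and using compact embedding $W^{1,2}\hookrightarrow L^2$). The affine class $A_R$ is weakly closed. Hence minimizers exist. For property (i), I would apply Schwarz symmetrization: replacing a minimizer $h$ by its decreasing rearrangement $h^\ast$ on $B_R$ does not increase the Dirichlet energy (Pólya--Szegő), leaves $\int_{B_R}\tilde W_{\mathrm{rad}}(h)$ unchanged (since $\tilde W_{\mathrm{rad}}$ is a monotone function of the level sets, equimeasurability), and is compatible with the boundary condition $h=q$ provided $q=\max$; here one must be a little careful and instead symmetrize the function $q-h$, or argue that any minimizer satisfies $h\le q$ a.e.\ by truncation (truncating at level $q$ lowers both terms and preserves the boundary data). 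So one obtains a radial, radially nondecreasing minimizer, giving (ii) for that particular minimizer. Continuity on $\overline{B_R}$ follows from Lemma \ref{lem2} applied with $W=\tilde W_{\mathrm{rad}}$, which satisfies $\mathbf{H_1}$--$\mathbf{H_3}$.

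The heart of the matter is (iii): showing \emph{every} minimizer lies between a largest and a smallest one, and that these extremes are radial and monotone. The key structural fact is that the set of minimizers of $J_{B_R}$ in $A_R$ is a lattice: if $\psi_1,\psi_2$ are minimizers then so are $\max(\psi_1,\psi_2)$ and $\min(\psi_1,\psi_2)$. This follows from the standard submodularity computation
\begin{equation*}
J_{B_R}(\max(\psi_1,\psi_2))+J_{B_R}(\min(\psi_1,\psi_2))=J_{B_R}(\psi_1)+J_{B_R}(\psi_2),
\end{equation*}
valid because $|\nabla \max(\psi_1,\psi_2)|^2+|\nabla\min(\psi_1,\psi_2)|^2=|\nabla\psi_1|^2+|\nabla\psi_2|^2$ a.e.\ (the gradients are just redistributed over the sets $\{\psi_1>\psi_2\}$ and $\{\psi_1\le\psi_2\}$) and the potential term is pointwise invariant under swapping; since the right side is twice the minimum, both terms on the left must equal the minimum. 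Having the lattice property, I would take $\overline{\Psi}_R$ to be the essential supremum of all minimizers: choosing a maximizing sequence for, say, $\int_{B_R}\psi$, taking successive maxima to get an increasing sequence of minimizers bounded by $q$, passing to the monotone a.e.\ limit, and checking (via the bound and weak compactness plus lower semicontinuity, or directly from the uniform energy bound) that the limit is again a minimizer and dominates every other minimizer — indeed for any minimizer $\psi$, $\max(\psi,\overline{\Psi}_R)$ is a minimizer $\ge\overline{\Psi}_R$, forcing $\psi\le\overline{\Psi}_R$. Symmetrically for $\underline{\Psi}_R$. Because the Schwarz symmetrization of $\overline{\Psi}_R$ is a minimizer that dominates $\overline{\Psi}_R$ pointwise a.e.\ (rearrangement only moves mass toward the boundary, increasing the essential supremum region) — more carefully, one shows the symmetrization equals $\overline{\Psi}_R$ by maximality — $\overline{\Psi}_R$ is itself radial and nondecreasing, giving (i) and (ii); likewise $\underline{\Psi}_R$ is the radial \emph{smallest} minimizer. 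Uniqueness of the minimizer \emph{with properties (i)--(ii)} is not claimed (and need not hold when $W_{\mathrm{rad}}$ is flat); what Proposition \ref{prop1} asserts is uniqueness of $\overline{\Psi}_R$ and $\underline{\Psi}_R$ as the extremal elements, which is immediate from their defining maximality/minimality.

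The main obstacle I anticipate is handling the rearrangement and lattice arguments cleanly when $\tilde W_{\mathrm{rad}}$ is merely nondecreasing and lower semicontinuous rather than continuous or smooth — so there is no Euler--Lagrange equation to lean on, and ``monotone'' must be extracted purely variationally. In particular, proving that $\overline{\Psi}_R$ coincides with its own Schwarz rearrangement requires knowing the rearrangement is again a minimizer (Pólya--Szegő gives $\le$, equimeasurability gives the potential term is unchanged, so it is a minimizer) and then that a radial minimizer $\ge\overline{\Psi}_R$ a.e.\ which has the same distribution function must equal $\overline{\Psi}_R$ a.e. One also needs the strict-inequality/equality case in Pólya--Szegő only to the extent of concluding $\overline{\Psi}_R$ is a.e.\ equal to a radial function; the continuity from Lemma \ref{lem2} then upgrades ``a.e.'' to ``everywhere.'' A secondary technical point is the boundary condition under rearrangement/truncation, resolved as above by first truncating any minimizer at level $q$ (which is energy-nonincreasing and boundary-compatible), so that Schwarz symmetrization of $q-h\ge0$ is legitimate and yields a function still equal to $q$ on $\partial B_R$.
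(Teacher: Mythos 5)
Your argument is correct in substance but follows a genuinely different route from the paper. For radiality the paper does not use Schwarz symmetrization: in Lemma \ref{pl1} it iterates reflections generating dihedral groups $D_{2^k}$ in each coordinate plane, passes to the limit $k\to\infty$, and repeats $n-1$ times to produce a radial minimizer; your rearrangement of $q-h$ (after truncating at the levels $0$ and $q$) achieves the same end via P\'olya--Szeg\H{o} and equimeasurability, and is arguably shorter --- though your heuristic that the rearrangement ``dominates $\overline{\Psi}_R$ pointwise'' is false as stated, and the correct deduction, which you do supply, is that the rearrangement is a minimizer, hence $\leq\overline{\Psi}_R$ by maximality, and equimeasurability then forces equality a.e. For the extremality (iii) the divergence is larger: the paper obtains $\overline{\Psi}_R$ and $\underline{\Psi}_R$ as limits $\lambda\to 1^{\mp}$ of radial minimizers $\psi_R^\lambda$ of the perturbed functionals $J_{B_R}^\lambda$ with potential $\lambda\tilde W_{\mathrm{rad}}$, and the domination of an arbitrary minimizer $\tilde\psi$ follows from Lemma \ref{pl2} (the vector comparison lemma) applied with $u=\tilde\psi$; you instead invoke the lattice property of the set of minimizers (submodularity of $J_{B_R}$ under $\max/\min$) and extract a maximal element by maximizing $\int_{B_R}\psi$ along an increasing sequence of minimizers. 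Your route is more elementary and self-contained for this scalar problem and avoids the perturbation in $\lambda$ entirely; the paper's route is heavier here but reuses Lemma \ref{pl2}, which is needed anyway for Theorem \ref{th1}, and the one-parameter family $\psi_R^\lambda$ is precisely what later yields the monotonicity in $\lambda$ and the generic-uniqueness statement of Lemma \ref{pl3a}, which your construction would not provide for free.
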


\begin{remark}\label{rem1}
In general, the minimizer of $J_{B_R}$ in the class $A_R$ is not unique. For instance, let us assume that $n=2$, and
\begin{equation*}
\tilde W_{\mathrm{rad}}(r)=\begin{cases}
0 &\text{if } r=0,\\
1 &\text{if } r>0.
\end{cases}
\end{equation*}
Then, a computation (cf. Lemma \ref{lemad}) shows that when $R=R_0:=\sqrt{2e}\, q$, $J_{B_R}$ admits exactly two radial minimizers in the class $A_R$, namely $\overline{\Psi}_R \equiv q$, and
\begin{equation*}
\underline{\Psi}_R(x)=\begin{cases}
0 &\text{if } |x|\leq \sqrt{2}\, q,\\
2q \ln (\frac{|x|}{\sqrt{2}\, q}) &\text{if } \sqrt{2}\, q\leq |x|\leq R_0.
\end{cases}
\end{equation*}
On the other hand, when $R>R_0$ (resp. $R<R_0$), $J_{B_R}$ admits only one radial minimizer in the class $A_R$, namely
\begin{equation*}
\underline{\Psi}_R(x)=\overline{\Psi}_R(x)=\begin{cases}
0 &\text{if }  |x|\leq a_R,\\
q \frac{ \ln (|x|)-\ln a_R}{\ln R- \ln a_R} &\text{if }  |x|\geq a_R,
\end{cases}
\end{equation*}
where $a_R$ is the only solution of $\sqrt{2} \, a\ln(R/a)=q$ in the inteval $(\frac{R}{\sqrt{e}},R)$  (resp. $\underline{\Psi}_R=\overline{\Psi}_R\equiv q$). Thus, Proposition \ref{prop1} implies that this is the only minimizer of $J_{B_R}$ in the class $A_R$.
\end{remark}

We refer to Lemma \ref{pl3a} below, for further properties of the comparison functions $\underline{\Psi}_R$ and $\overline{\Psi}_R$. In particular, we study their dependence on $R$, and we establish that the minimizer of $J_{B_R}$ in the class $A_R$ is unique, for every $R\in (0,\infty)\setminus D$, where $D$ is a countable subset of $(0,\infty)$.

Next, we state the comparison principle:
\begin{theorem}\label{th1}
We assume that hypotheses $\mathbf{H_1}$-$\mathbf{H_3}$ hold, and that the map $u\in W^{1,2}_{\mathrm{loc}}(\Omega;\R^m)$ satisfies \eqref{qbound} and \eqref{enemin}. Let $\overline{\Psi}_R$ be the radial minimizer defined in Proposition \ref{prop1}. Then, 
for every closed ball $\overline{B_R(x_0)}$ contained in $\Omega$, we have
\begin{equation}\label{compp}
|u(x)|\leq \overline{\Psi}_R(x-x_0), \text{ on }\overline{B_R(x_0)}.
\end{equation}
\end{theorem}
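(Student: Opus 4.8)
The plan is to prove the comparison \eqref{compp} by a contradiction/truncation argument: assuming $|u|$ exceeds $\overline{\Psi}_R(\cdot-x_0)$ somewhere, construct a competitor for $u$ that lowers the energy. Without loss of generality take $x_0=0$ and write $\overline\Psi:=\overline{\Psi}_R$, $\psi:=\overline\Psi_{R,\mathrm{rad}}$. Since both $|u|$ and $\overline\Psi$ are continuous on $\overline{B_R}$ (Lemma~\ref{lem2} and Proposition~\ref{prop1}(i)), with $|u|\le q=\overline\Psi$ on $\partial B_R$, the open set $U:=\{x\in B_R:|u(x)|>\overline\Psi(x)\}$ satisfies $\overline U\subset B_R$ and $|u|=\overline\Psi$ on $\partial U$. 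The competitor I would use is the radial projection
\begin{equation*}
v(x):=\begin{cases}\dfrac{\overline\Psi(x)}{|u(x)|}\,u(x),& x\in U,\ u(x)\neq0,\\[1mm] u(x),& x\in B_R\setminus U,\end{cases}
\end{equation*}
which agrees with $u$ on $\partial U$ (and $v\equiv0$ where $u$ vanishes, consistent with $\overline\Psi\ge0$), so $v-u\in W^{1,2}_0(U;\R^m)$ and $\|v\|_{L^\infty}\le q$; thus \eqref{enemin} applies on any Lipschitz $\omega$ with $\overline U\subset\omega\subset\overline\omega\subset B_R$, giving $E_U(u)\le E_U(v)$.

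The heart of the argument is then to show $E_U(v)<E_U(u)$ unless $U=\emptyset$, which forces the contradiction. For the gradient term one uses the standard pointwise inequality for radial projections: writing $u=\rho\,\xi$ with $\rho=|u|$ and $|\xi|=1$ a.e. on $U$, one has $|\nabla u|^2=|\nabla\rho|^2+\rho^2|\nabla\xi|^2$, while $v=\overline\Psi\,\xi$ gives $|\nabla v|^2=|\nabla\overline\Psi|^2+\overline\Psi^2|\nabla\xi|^2$; since $0\le\overline\Psi<\rho$ on $U$, the angular part only decreases, so
\begin{equation*}
\tfrac12|\nabla v|^2-\tfrac12|\nabla u|^2\le \tfrac12|\nabla\overline\Psi|^2-\tfrac12|\nabla\rho|^2.
\end{equation*}
For the potential term, hypothesis $\mathbf{H_3}$ is exactly what is needed: $W(v)=W_{\mathrm{rad}}(\overline\Psi)+W_0(\overline\Psi\,\xi)$ and $W(u)=W_{\mathrm{rad}}(\rho)+W_0(\rho\,\xi)$, and since $0\le\overline\Psi<\rho$, monotonicity of $W_{\mathrm{rad}}$ and of $W_0$ along rays gives $W(v)\le W_{\mathrm{rad}}(\overline\Psi)+W_0(\rho\,\xi)\le W_{\mathrm{rad}}(\overline\Psi)+W(u)-W_{\mathrm{rad}}(\rho)$, i.e. $W(v)-W(u)\le W_{\mathrm{rad}}(\overline\Psi)-W_{\mathrm{rad}}(\rho)$. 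Noting further that $\overline\Psi<q$ on $U$ forces $\tilde W_{\mathrm{rad}}(\overline\Psi)=W_{\mathrm{rad}}(\overline\Psi)$, we can assemble:
\begin{equation*}
E_U(v)-E_U(u)\le \int_U\Big[\tfrac12|\nabla\overline\Psi|^2+\tilde W_{\mathrm{rad}}(\overline\Psi)\Big]-\int_U\Big[\tfrac12|\nabla\rho|^2+\tilde W_{\mathrm{rad}}(\rho)\Big]=J_{B_R}(\tilde h)-J_{B_R}(\hat h),
\end{equation*}
where $\hat h$ equals $\rho$ on $U$ and $\overline\Psi$ on $B_R\setminus U$, and $\tilde h\equiv\overline\Psi$ (using $\tilde W_{\mathrm{rad}}(\rho)\le W(u)$ on $B_R\setminus U$ and that $\hat h$ and $\overline\Psi$ agree there so those integrals cancel; one also checks $|\nabla\hat h|^2\le|\nabla u|^2$ a.e.\ via the same projection inequality, or simply works directly with the scalar $|u|$ which is $W^{1,2}$). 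Both $\hat h$ and $\tilde h=\overline\Psi$ lie in the class $A_R$, and by minimality of $\overline\Psi$ we get $J_{B_R}(\hat h)\ge J_{B_R}(\overline\Psi)$, hence $E_U(v)-E_U(u)\le0$, compatible with \eqref{enemin}; to get the \emph{strict} inequality needed, one invokes uniqueness of $\overline\Psi$ as the \emph{largest} minimizer (Proposition~\ref{prop1}(iii)): if $|U|>0$ then $\hat h\ge\overline\Psi$ with $\hat h>\overline\Psi$ on the positive-measure set $U$, so $\hat h\neq\overline\Psi$; were $J_{B_R}(\hat h)=J_{B_R}(\overline\Psi)$ then $\hat h$ would be a minimizer exceeding $\overline\Psi$, contradicting (iii). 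Therefore $J_{B_R}(\hat h)>J_{B_R}(\overline\Psi)$, giving $E_U(v)<E_U(u)$, contradicting \eqref{enemin}. Hence $|U|=0$, and by continuity $U=\emptyset$, which is \eqref{compp}.

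The main obstacle I anticipate is the rigorous handling of the radial projection $v$ at points where $u=0$: one must show $v\in W^{1,2}(U;\R^m)$ with the claimed gradient bound even though $u/|u|$ is undefined on $\{u=0\}$. The clean route is to avoid $\xi$ altogether in the final estimate and work with the scalar function $|u|\in W^{1,2}_{\mathrm{loc}}$, using only that $\big|\nabla|u|\big|\le|\nabla u|$ a.e.\ and that $\min(|u|,\overline\Psi)$ is an admissible competitor for the scalar problem — but since $v$ itself must be a vector competitor for $u$, one instead approximates, e.g.\ replacing $\overline\Psi/|u|$ by $\overline\Psi/\max(|u|,\delta)$, deriving the energy inequality, and letting $\delta\to0$ by dominated convergence (the bound $|\nabla v_\delta|\le C|\nabla u|$ holds uniformly). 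A second, minor point is justifying that $U$ has Lipschitz boundary is \emph{not} required — one only needs a Lipschitz $\omega$ sandwiched between $\overline U$ and $B_R$, which exists since $\overline U$ is compact in $B_R$. Verifying the pointwise gradient-decomposition inequality and checking the boundary-trace matching on $\partial U$ are routine once continuity of $|u|$ and $\overline\Psi$ is in hand.
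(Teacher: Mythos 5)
Your argument is correct, but it reaches \eqref{compp} by a genuinely different route than the paper. You work directly with $\overline{\Psi}_R$ and, after combining the minimality of $u$ (against the radial truncation $v$) with the minimality of $\overline{\Psi}_R$ (against $\hat h=\max(|u|,\overline{\Psi}_R)$), you find yourself in the equality case: $\hat h$ is then itself a minimizer of $J_{B_R}$ in $A_R$ dominating $\overline{\Psi}_R$, and you dispose of it by invoking the extremality property of Proposition \ref{prop1}(iii). The paper cannot argue this way at $\lambda=1$ precisely because equality of energies alone does not rule out $\{|u|>\psi_R\}\neq\emptyset$; instead it proves Lemma \ref{pl2}, a comparison against the minimizers $\psi_R^\lambda$ of the perturbed functionals $J^\lambda_{B_R}$ with $\lambda\in(0,1)$, where the slack term $I_4=(1-\lambda)\int(W_{\mathrm{rad}}(\rho)-W_{\mathrm{rad}}(\psi_R^\lambda))$ forces $W_{\mathrm{rad}}(\rho)\equiv W_{\mathrm{rad}}(\psi_R^\lambda)$ on the bad set, after which a sub/super-harmonicity argument and the maximum principle kill that set; Theorem \ref{th1} then follows by letting $\lambda\to1^-$. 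The two proofs share the same competitor (radial projection of $u$ onto the graph of the comparison function) and the same decomposition $|\nabla u|^2=|\nabla\rho|^2+\rho^2|\nabla\n|^2$ with the monotonicity of $W_{\mathrm{rad}}$ and $W_0$ from $\mathbf{H_3}$. What your route buys is brevity and the avoidance of the maximum principle; what it costs is that the hard rigidity is not eliminated but relocated into Proposition \ref{prop1}(iii), which the paper itself proves \emph{via} Lemma \ref{pl2} — so your proof is logically sound as a deduction of Theorem \ref{th1} from Proposition \ref{prop1}, but it could not replace the paper's machinery wholesale (and Lemma \ref{pl2} is reused later, e.g.\ in Lemma \ref{pl3a}). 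Two small points to tighten: $\overline U\subset B_R$ need not be compactly contained (take $\omega=B_R$ and note $v-u$ is continuous, vanishes on $\partial B_R$, hence lies in $W^{1,2}_0(B_R)$); and the $W^{1,2}$ regularity of $v$ near points of $\partial U$ where $\overline{\Psi}_R=|u|=0$ does require the care you flag — the paper resolves it by the localization argument in Lemma \ref{pl2} (noting $\{\overline{\Psi}_R>0\}$ is either $B_R$ or an annulus, and $v\equiv0$ on the complementary ball), which is cleaner than a $\delta$-regularization.
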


\begin{remark}
In the case where $m=1$, $W_0\equiv 0$, and $W(u)=W_{\mathrm{rad}}(|u|)$, the bound provided by Theorem \ref{th1} is optimal, since the function $\overline{\Psi}_R:B_R\to [0,q]$ is a minimizer of \eqref{ene} satisfying \eqref{qbound}.
\end{remark}

\begin{remark}
Theorem \ref{th1} covers the case where the profile of $W$ is not uniform along the rays emanating from the origin. For instance, if we take $W(u)=|u|^{\alpha(u/|u|)}$ in the unit ball $B_1$, with $\alpha:\SF^{m-1}\to(0,\infty)$ a continuous function, then setting $\overline\alpha:=\max_{\SF^{m-1}}\alpha$, and $\underline\alpha:=\min_{\SF^{m-1}}\alpha$, we can apply Theorem \ref{th1} in the ball of radius $q:=e^{-\underline{ \alpha}^{-1}}$, with $W_{\mathrm{rad}}(s)=s^{\overline \alpha}$, $\forall s\in [0,q]$, since the functions $[0,q]\mapsto W(s\xi)-W_{\mathrm{rad}}(s)$ are nondecreasing, for every $\xi\in\SF^{m-1}$.

\end{remark}

\begin{remark}
Let $W_{\mathrm{rad}}:[0,q]\to[0,\infty)$ (resp. $V_{\mathrm{rad}}:[0,q]\to[0,\infty)$) be two bounded, nondecreasing, lower semicontinuous functions, and let $\overline{\Psi}_R$ (resp. $\overline{\Phi}_R$) be the corresponding comparison functions provided by Proposition \ref{prop1}. If moreover we assume that the function $V_{\mathrm{rad}}-W_{\mathrm{rad}}$ is nondecreasing on $[0,q]$, then an application of Theorem \ref{th1} with $u=\overline{\Phi}_R$, and $W(u)=V_{\mathrm{rad}}(|u|)$, shows that $\overline{\Phi}_R\leq \overline{\Psi}_R$ holds on $B_R$. Thus, the optimal comparison function $\overline{\Psi}_R$ provided by Proposition \ref{prop1}, is obtained by choosing the greatest function $W_{\mathrm{rad}}$ satisfying $\mathbf{H_3}$. This also explains why the profile of the comparison function $\overline{\Psi}_R$, corresponding to the potential $W_{\mathrm{rad}}(s)=s^\alpha$ ($\alpha>0$), flattens as $\alpha$ decreases. 
\end{remark}

We also have the following useful version of Theorem \ref{th1} at the boundary of $\Omega$:
\begin{theorem}\label{th1bis}
We assume that hypotheses $\mathbf{H_1}$-$\mathbf{H_3}$ hold. Let $\Omega\subset\R^n$ be a bounded, open set with Lipschitz boundary, and let $u\in W^{1,2}(\Omega;\R^m)$ be a map satisfying \eqref{qbound}, and \eqref{enemin} for every
$v=u+\xi$ such that $\xi \in W^{1,2}_0(\Omega;\R^m)$, and $\|v\|_{L^\infty(\Omega;\R^m)}\leq q$.
Then, if the ball $B_R(x_0)$ intersects $\partial \Omega$, and if $u=0$ on $B_R(x_0)\cap\partial \Omega$, we have
\begin{equation}\label{compp}
|u(x)|\leq \overline{\Psi}_R(x-x_0), \text{ on }\overline{ B_R(x_0)\cap \Omega}.
\end{equation}
\end{theorem}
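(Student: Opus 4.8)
The plan is to reduce the boundary statement to the interior statement (Theorem \ref{th1}) by a reflection/extension argument combined with an approximation of the domain from inside $B_R(x_0)\cap\Omega$. Concretely, I would proceed as follows.

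\emph{Step 1: Extend the minimizer by zero.} Since $u=0$ on $B_R(x_0)\cap\partial\Omega$ in the trace sense, and $W(0)=0$, define $\tilde u$ on $B_R(x_0)$ by setting $\tilde u=u$ on $B_R(x_0)\cap\Omega$ and $\tilde u=0$ on $B_R(x_0)\setminus\overline\Omega$. Because $u\in W^{1,2}(\Omega;\R^m)$, $\Omega$ has Lipschitz boundary, and the traces match on $B_R(x_0)\cap\partial\Omega$, one checks in a standard way that $\tilde u\in W^{1,2}(B_R(x_0);\R^m)$, with $\nabla\tilde u=\nabla u\cdot\mathbf 1_\Omega$ a.e., and clearly $\|\tilde u\|_{L^\infty}\le q$. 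The point of extending by zero is that the energy does not increase on the added region: $\tilde W_{\mathrm{rad}}$ (indeed $W$) vanishes there, so no extra bulk cost is introduced.

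\emph{Step 2: Show $\tilde u$ is a local minimizer on $B_R(x_0)$ among perturbations bounded by $q$.} Let $\omega$ be a bounded Lipschitz open set with $\overline\omega\subset B_R(x_0)$ and let $\xi\in W^{1,2}_0(\omega;\R^m)$ with $\|\tilde u+\xi\|_{L^\infty(\omega)}\le q$. The restriction of $\xi$ to $\omega\cap\Omega$ lies in $W^{1,2}$, vanishes on $\partial(\omega\cap\Omega)\cap\Omega$ (its trace on $\partial\omega$ is zero and on $B_R(x_0)\cap\partial\Omega$ it equals $-\tilde u=0$), hence $v:=u+\xi$ is an admissible competitor for $u$ in the sense of \eqref{enemin} on $\omega\cap\Omega$ (after a harmless Lipschitz-domain regularization of $\omega\cap\Omega$, or by the density of smooth competitors). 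Comparing energies and using that $W\ge0$ and $\tilde W_{\mathrm{rad}}\circ\tilde u=0$ on $B_R(x_0)\setminus\Omega$, we get
\begin{equation*}
E_\omega(\tilde u)=E_{\omega\cap\Omega}(u)\le E_{\omega\cap\Omega}(u+\xi)\le E_\omega(\tilde u+\xi),
\end{equation*}
the last inequality because adding back the (nonnegative) energy on $\omega\setminus\Omega$ only increases the right-hand side. Thus $\tilde u$ satisfies \eqref{qbound} and \eqref{enemin} on $\Omega':=B_R(x_0)$.

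\emph{Step 3: Apply Theorem \ref{th1} and pass to the closure.} Apply Theorem \ref{th1} to $\tilde u$ on $\Omega'=B_R(x_0)$: for every closed ball $\overline{B_\rho(x_0)}\subset B_R(x_0)$, i.e.\ every $\rho<R$, we obtain $|\tilde u(x)|\le\overline\Psi_\rho(x-x_0)$ on $\overline{B_\rho(x_0)}$. Now let $\rho\uparrow R$. Using the monotone dependence of the comparison functions on the radius from Lemma \ref{pl3a} (the family $\overline\Psi_\rho$ increases to $\overline\Psi_R$ as $\rho\uparrow R$, pointwise and locally uniformly, by the continuity in $R$ asserted there), together with the continuity of $u$ on $\overline\Omega$ from Lemma \ref{lem2} — which extends to continuity of $\tilde u$ up to $\overline{B_R(x_0)}$ near the flat part where $u=0$, since the extension is continuous there — we conclude $|u(x)|=|\tilde u(x)|\le\overline\Psi_R(x-x_0)$ on $\overline{B_R(x_0)\cap\Omega}$.

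\emph{Main obstacle.} The delicate point is Step 2: justifying that a perturbation of $\tilde u$ on a set $\omega$ straddling $\partial\Omega$ yields an admissible competitor for the \emph{original} minimizer $u$ on the possibly irregular set $\omega\cap\Omega$, whose boundary need not be Lipschitz. This is handled either by approximating $\omega\cap\Omega$ by smooth subdomains and using $W^{1,2}_0$-density, or by invoking that \eqref{enemin} for a Lipschitz domain $\Omega$ with prescribed boundary data is equivalent to local minimality against all compactly supported $W^{1,2}$ perturbations; the trace-matching ($\xi=0$ on $B_R(x_0)\cap\partial\Omega$ because $\tilde u=0$ there) is exactly what makes the gluing across $\partial\Omega$ legitimate. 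A secondary technical point, the $W^{1,2}$-regularity of the zero-extension in Step 1, is classical for Lipschitz domains and matching traces.
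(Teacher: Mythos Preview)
Your Step 2 contains a genuine error. You assert that for an arbitrary perturbation $\xi \in W^{1,2}_0(\omega;\R^m)$ with $\overline\omega \subset B_R(x_0)$, the trace of $\xi$ on $B_R(x_0) \cap \partial\Omega$ ``equals $-\tilde u = 0$''. This is unfounded: $\xi$ vanishes on $\partial\omega$, but the portion $\partial\Omega \cap \omega$ is \emph{interior} to $\omega$, so the trace of $\xi$ there is completely unconstrained. Consequently $u + \xi|_\Omega$ need not be an admissible competitor for $u$ (it may change the boundary data on $\partial\Omega$), and your middle inequality $E_{\omega \cap \Omega}(u) \leq E_{\omega \cap \Omega}(u+\xi)$ is not justified. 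In fact the zero extension $\tilde u$ is generally \emph{not} a local minimizer on $B_R(x_0)$. Already for $n = m = 1$, $\Omega = (0,2)$, $W(s) = s^2/2$, the minimizer with data $u(0)=0$, $u(2)=q$ is $u(x) = q\sinh x/\sinh 2$, whose zero extension to $(-2,2)$ has a corner at $0$ (since $u'(0)>0$). The functional is strictly convex and its Euler--Lagrange solution on any subinterval is smooth and unique for given endpoints, so the kinked extension cannot minimize on any interval straddling $0$. Thus the reduction to Theorem \ref{th1} via zero extension fails.

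The paper's proof bypasses this by not attempting to make the extension a local minimizer. It works directly on $B_R(x_0)\cap\Omega$ and uses only the \emph{specific} comparison map of Lemma \ref{pl2}, namely $\hat u = \psi_R^\lambda\,\n$ where $\rho > \psi_R^\lambda$, and $\hat u=u$ otherwise. The crucial observation is that on $B_R(x_0)\cap\partial\Omega$ one has $\rho=|u|=0\le\psi_R^\lambda$, so $\hat u=u$ there; together with $\psi_R^\lambda=q\ge\rho$ on $\partial B_R(x_0)\cap\Omega$, this gives $\hat u-u\in W^{1,2}_0(\Omega;\R^m)$ after extension by zero, and the minimality hypothesis on $u$ in $\Omega$ yields exactly inequality \eqref{ga1} on $B_R(x_0)\cap\Omega$. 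The remainder of the argument (inequalities \eqref{ga2}--\eqref{ii5} and the concluding sub/superharmonicity step) carries over verbatim. In short, the single comparison map needed for the bound automatically respects the zero boundary condition on $\partial\Omega$, whereas a generic perturbation does not.
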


In Lemma \ref{pl3} below, we determine the conditions implying the existence of dead core regions for the comparison function $\overline{\Psi}_R$. Therefore, by combining Theorem \ref{th1} with Lemma \ref{pl3}, we also give in Theorem \ref{th2} a sufficient condition for the existence of dead core regions\footnote{As a consequence of Theorem \ref{th1bis} and Lemma \ref{pl3}, we also deduce the existence of dead core regions at the boundary of $\Omega$.}, in the case of vector minimizers:

\begin{theorem}\label{th2}
In addition to hypotheses $\mathbf{H_1}$-$\mathbf{H_3}$, we assume that 
\begin{itemize}\label{hypw}
\item[$\mathbf{H_4}$]  $W_{\mathrm{rad}}(s)>0$, $\forall s \in (0,q]$, and $I_q:=\int_0^q\frac{\dd s}{\sqrt{W_{\mathrm{rad}}(s)}}<\infty$.
\end{itemize}
Then, if the map $u\in W^{1,2}_{\mathrm{loc}}(\Omega;\R^m)$ satisfies \eqref{qbound} and \eqref{enemin}, we have $u(x)=0$, provided that $d(x,\partial \Omega)\geq (4n+\sqrt{2})I_q$. 
\end{theorem}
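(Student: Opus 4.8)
The plan is to combine the comparison principle of Theorem~\ref{th1} with the dead-core analysis of the radial comparison function $\overline{\Psi}_R$ supplied by Lemma~\ref{pl3}. Fix a point $x\in\Omega$ with $d(x,\partial\Omega)\ge(4n+\sqrt2)I_q$. For every $R<d(x,\partial\Omega)$ the closed ball $\overline{B_R(x)}$ is contained in $\Omega$ --- it is connected, it meets $\Omega$, and it avoids $\partial\Omega$ --- so Theorem~\ref{th1}, applied with $x_0=x$, yields
\begin{equation*}
|u(x)|\le\overline{\Psi}_R(0)=\overline{\Psi}_{R,\mathrm{rad}}(0).
\end{equation*}
Thus it suffices to produce a single admissible radius $R<d(x,\partial\Omega)$ for which $\overline{\Psi}_{R,\mathrm{rad}}(0)=0$.

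This is where hypothesis $\mathbf{H_4}$ and Lemma~\ref{pl3} enter. Since $W_{\mathrm{rad}}>0$ on $(0,q]$ and $I_q<\infty$, Lemma~\ref{pl3} shows that the radial profile $\overline{\Psi}_{R,\mathrm{rad}}$ --- nondecreasing and continuous on $[0,R]$ by Proposition~\ref{prop1} --- develops a nonempty dead core once $R$ exceeds an explicit threshold $c_nI_q$, with $c_n<4n+\sqrt2$; in particular $\overline{\Psi}_{R,\mathrm{rad}}(0)=0$ for all such $R$. Because $d(x,\partial\Omega)\ge(4n+\sqrt2)I_q>c_nI_q$, the interval of admissible radii $R$ with $c_nI_q<R<d(x,\partial\Omega)$ is nonempty; fixing one such $R$ we have at the same time $\overline{B_R(x)}\subset\Omega$ and $\overline{\Psi}_{R,\mathrm{rad}}(0)=0$, and the displayed inequality forces $u(x)=0$. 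The value $u(x)$ is genuine because minimizers are continuous (Lemma~\ref{lem2}), and only the interior form Theorem~\ref{th1} is used, $u$ being merely a local minimizer with no prescribed boundary data. (The additive $\sqrt2\,I_q$ simply guarantees a nonempty margin of radii; alternatively one lets $R\uparrow d(x,\partial\Omega)$ and invokes the monotonicity of $R\mapsto\overline{\Psi}_{R,\mathrm{rad}}(0)$ from Lemma~\ref{pl3a}.)

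Granting Lemma~\ref{pl3}, the argument above is routine; the substance --- and the step I expect to be the main obstacle --- is the dead-core estimate for $\overline{\Psi}_R$ itself. I would derive it from the variational characterization of $\overline{\Psi}_{R,\mathrm{rad}}$, playing two comparisons against each other: on one hand admissible competitors obtained by truncating and rescaling, of the form $\tfrac{q}{q-\epsilon}\,(\overline{\Psi}_R-\epsilon)_+$, which lie below $\overline{\Psi}_R$ and carry no potential energy on $\{\overline{\Psi}_R\le\epsilon\}$; on the other hand the one-dimensional dead-core profile $\phi$ attached to $W_{\mathrm{rad}}$ --- the solution of $\tfrac12(\phi')^2=W_{\mathrm{rad}}(\phi)$ with $\phi(0)=0$, which under $\mathbf{H_4}$ attains the value $q$ after a finite length comparable to $I_q$. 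The delicate point is the first-order term $\tfrac{n-1}{r}\,\overline{\Psi}_{R,\mathrm{rad}}'$ of the radial Euler--Lagrange relation: it carries the unfavourable sign and blocks a direct pointwise comparison with $x\mapsto\phi\big((|x|-\rho)_+\big)$, so it has to be absorbed quantitatively, for instance by iterating the estimate over dyadic radial annuli or by inserting a linear correction into the barrier. This is exactly what converts the clean one-dimensional layer length of order $I_q$ into the dimension-dependent constant $4n$, and where the bookkeeping concentrates.
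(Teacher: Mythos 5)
Your deduction of Theorem \ref{th2} is correct and is exactly the paper's (implicit) proof: apply Theorem \ref{th1} on a ball $\overline{B_R(x)}\subset\Omega$ and invoke Lemma \ref{pl3}, whose dead-core threshold is stated in terms of $\int_0^q\frac{\dd s}{\sqrt{2W_{\mathrm{rad}}(s)}}=I_q/\sqrt2$ and hence equals $(2\sqrt2 n+1)I_q<(4n+\sqrt2)I_q$, which is precisely the nonempty margin of admissible radii $R<d(x,\partial\Omega)$ your argument requires. The only divergence lies in your speculative sketch of Lemma \ref{pl3} itself: the paper does not absorb the $\tfrac{n-1}{r}$ term by barriers or dyadic iteration, but combines the Pohozaev identity (Lemma \ref{lem1}) with the energy of a radial one-dimensional-layer competitor to get $W_{\mathrm{rad}}(\overline{\Psi}_{R,\mathrm{rad}}(r))\le|\overline{\Psi}'_{R,\mathrm{rad}}(r)|^2$ for a.e.\ $r$ beyond $4n$ layer lengths and then integrates the inverse profile --- but since Lemma \ref{pl3} is a citable result of the paper, this does not affect the validity of your proof of Theorem \ref{th2}.
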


In the scalar case, if hypothesis $\mathbf{H_4}$ does not hold, then the existence of dead cores is partially ruled out by the following variational version of the maximum principle: 
\begin{proposition}\label{prop2}
Let $m=1$, and let $\omega$ be a bounded open set with Lipschitz boundary, such that $\overline \omega\subset\Omega$. We assume that hypotheses $\mathbf{H_1}$-$\mathbf{H_3}$ hold for $W(u)=W_{\mathrm{rad}}(|u|)$ ($W_0\equiv 0$), and moreover that 
\begin{equation}\label{hypwinf}
\int_0^q\frac{\dd s}{\sqrt{W(s)}}=\infty \text{ or $W\equiv 0$ in a neighbourhood of $0$}.
\end{equation}
Then, if the function $u\in W^{1,2}_{\mathrm{loc}}(\Omega;\R)$ satisfies \eqref{qbound}, \eqref{enemin}, and $u > 0$ on $\partial \omega$, we also have $u(x)>0,\, \forall x \in \omega$.
\end{proposition}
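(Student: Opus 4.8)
The plan is to argue by contradiction: suppose $u$ attains the value $0$ at some interior point of $\omega$, and derive a contradiction with the minimality of $u$ by constructing a competitor of strictly smaller energy. First I would fix a ball $B_\rho(x_0)\subset\omega$ such that $u(x_0)=0$ (recall $u\geq 0$ in $\omega$ since, by $\mathbf{H_1}$–$\mathbf{H_3}$ with $W_0\equiv 0$ and the boundary hypothesis $u>0$ on $\partial\omega$, we may replace $u$ by $|u|$ without increasing the energy, so WLOG $u\geq 0$; strictly we compare $|u|$ with $\underline\Psi$, $\overline\Psi$, and note that if the zero set of $u$ meets $\omega$ but not $\partial\omega$, it has an interior point by the continuity of $u$ from Lemma \ref{lem2} — more precisely one uses that $u\geq0$ is continuous and the zero set is closed in $\omega$; I would either assume the zero set has nonempty interior, or handle the degenerate case separately by a density/barrier argument). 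The heart of the matter is then a \emph{one-dimensional ODE comparison}: condition \eqref{hypwinf}, $\int_0^q \dd s/\sqrt{W_{\mathrm{rad}}(s)}=\infty$, is exactly the Osgood-type condition that prevents a nonnegative solution of the radial ODE $\Psi'' + \frac{n-1}{r}\Psi' = W_{\mathrm{rad}}'(\Psi)$ (or its variational/weak analogue) from leaving the value $0$ in finite "time"; equivalently, it forces the comparison minimizer $\overline\Psi_R$ from Proposition \ref{prop1} to be strictly positive on $B_R$ for every $R>0$, i.e. to have \emph{no dead core}. This is the content of Lemma \ref{pl3} referenced in the text, which I would invoke.

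The key steps, in order. (1) Reduce to $u\geq0$ and to showing the zero set $Z:=\{x\in\omega: u(x)=0\}$ is empty; suppose not, and pick a point $x_0$ and a radius $R$ with $\overline{B_R(x_0)}\subset\omega$ and $x_0\in Z$. (2) Build an explicit positive subsolution barrier near $x_0$. Because $W_{\mathrm{rad}}$ is nondecreasing and l.s.c. with $W_{\mathrm{rad}}(s)>0$ for... wait, here $\mathbf{H_4}$ fails — instead I use only that $W$ is bounded, say $W_{\mathrm{rad}}\leq M$. The cleanest route is: compare $u$ on $B_R(x_0)$ with the \emph{minimizer} $\psi$ of $J_{B_R(x_0)}$ over $\{h = u\ \text{on}\ \partial B_R(x_0)\}$; minimality of $u$ for $E_\omega$ and a cut-and-paste argument (replacing $u$ by $\min(u,\psi)$ or by $\psi$ inside $B_R(x_0)$, using that $W(u)=W_{\mathrm{rad}}(|u|)$ so the energy is exactly $J$) forces $u\geq$ (a positive radial barrier) on a smaller ball. (3) The positive radial barrier: let $w$ solve $\Delta w = W_{\mathrm{rad}}'(w)$ radially — but $W_{\mathrm{rad}}$ need not be $C^1$, so instead I would use the variational comparison functions themselves. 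Specifically, by translation invariance and the uniqueness/monotonicity in Proposition \ref{prop1}, on $B_R(x_0)$ the minimizer of $J$ with a \emph{positive} boundary datum (e.g. the constant $\delta:=\min_{\partial B_R(x_0)} u>0$, using $u>0$ on the closed set $\partial B_R(x_0)\subset\omega\setminus Z$ wait — $\partial B_R(x_0)$ may intersect $Z$; choose $R$ first so that it does not, possible since $Z\neq\omega$ and $Z$ is closed so $\omega\setminus Z$ is open and nonempty, pick $x_0\in Z$ on the boundary of $Z$ relative to... ) I would instead pick $x_1\in\omega\setminus \overline{Z}$ close to $Z$ and a ball around it touching $Z$.

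The cleanest formulation: let $d_0 = \mathrm{dist}(Z,\partial\omega)>0$ (if $Z=\emptyset$ we are done; if $Z\neq\emptyset$ it is a nonempty closed subset of the open set $\omega$). Pick $x_0\in Z$ and $R<d_0$ so $\overline{B_R(x_0)}\subset\omega$. Since $u$ is continuous and $u>0$ on... hmm, $u$ could vanish on a large set. Final clean approach I would actually write: let $m_R:=\inf_{\partial B_R(x_0)} u \geq 0$. By cut-and-paste minimality, $|u|\leq \overline\Psi^{(m_R)}_R$ AND (replacing $u$ by $\max$ with the upward barrier — here is where monotonicity of $W_{\mathrm{rad}}$ and l.s.c. give a comparison) $u \geq \underline\Psi^{(m_R)}_R$ on $B_R(x_0)$ in the appropriate sense, where $\underline\Psi^{(c)}_R$ is the minimizer with boundary value $c$. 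If $m_R>0$ then by Lemma \ref{pl3} (no dead core under \eqref{hypwinf}), $\underline\Psi^{(m_R)}_R>0$ at $x_0$, contradicting $x_0\in Z$. So $m_R=0$ for every small $R$, meaning $u$ vanishes on arbitrarily small spheres around every point of $Z$; letting $R\downarrow$ and using continuity plus a connectedness/clopen argument on $\omega$, conclude $Z$ is open in $\omega$, hence $Z=\emptyset$ or $Z=\omega$; the latter is excluded by $u>0$ on $\partial\omega$ and continuity. The main obstacle is Step (2)–(3): making rigorous, \emph{without} a classical strong maximum principle (since $W_{\mathrm{rad}}$ is merely l.s.c., possibly discontinuous, and $u$ is only $W^{1,2}\cap C^0$), the comparison $u\geq \underline\Psi^{(m_R)}_R$; I expect this to follow from the same cut-and-paste technique used to prove Theorem \ref{th1} (replace $u$ by $\max(u,\underline\Psi^{(m_R)}_R)$ on $B_R(x_0)$, check it is an admissible competitor with energy not larger, using $W=W_{\mathrm{rad}}(|u|)$ nondecreasing), combined with the no-dead-core dichotomy of Lemma \ref{pl3}, which is precisely where hypothesis \eqref{hypwinf} enters through an Osgood/Gronwall estimate on the radial profile.
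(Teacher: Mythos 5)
The core comparison idea is the right one --- bound $u$ from below by a radial minimizer of $J$ and invoke the first bullet of Lemma \ref{pl3} under \eqref{hypwinf} --- but two steps of your architecture do not close. The most serious is the final topological step. From ``$m_R=\inf_{\partial B_R(x_0)}u=0$ for every admissible $R$'' you only learn that every sphere centered at a point of the zero set $Z$ meets $Z$ again; this is a statement about $Z$ having no spherical gaps (a line segment already has this property), and it does \emph{not} imply that $Z$ is open, so the clopen/connectedness argument cannot conclude $Z=\emptyset$. The paper sidesteps this entirely by making a single \emph{global} comparison rather than local ones at zeros of $u$: since $u$ is continuous and $u>0$ on $\partial\omega$, one has $u\geq\epsilon$ on $\partial\omega$ for some $\epsilon>0$; one then takes a large ball $B_R\supset\omega$, with $R$ chosen outside the countable exceptional set of Lemma \ref{pl3a}(d) so that the minimizer $\Phi$ of $J_{B_R}$ with constant boundary value $\epsilon$ is \emph{unique}, and proves $u\geq\Phi$ on all of $\omega$ in one stroke; positivity of $\Phi$ (no dead core) then follows from Lemma \ref{pl3}, or trivially if $W_{\mathrm{rad}}$ vanishes on some $[0,\eta]$.

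The second gap is in the local inequality itself: ``replace $u$ by $\max(u,\underline{\Psi})$ and check the energy does not increase'' only yields, after combining the minimality of $u$ and of $\Psi$, the \emph{equality} of energies on $\{\Psi>u\}$, i.e.\ that $\min(u,\Psi)$ is another minimizer of $J$ with the same boundary data. To upgrade this to the pointwise bound $u\geq\Psi$ you must either invoke uniqueness of that minimizer (the paper's route, hence the enlargement of $R$) or the extremality property (iii) of $\underline{\Psi}_R$ in Proposition \ref{prop1}; your sketch invokes neither. Note also that the mechanism here is not the monotonicity of $W_{\mathrm{rad}}$ but the exact coincidence $E_\omega=J_\omega$ forced by $W_0\equiv 0$, which is precisely why the proposition is restricted to $W=W_{\mathrm{rad}}(|\cdot|)$. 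Your reduction to $u\geq 0$ can be dispensed with (the paper compares $u$ itself with $\Phi$), or, if kept, should be justified in one line: if $|u|>0$ throughout $\omega$, then on each connected component $u$ has constant sign, which must be positive since $u>0$ on the component's boundary.
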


The plan of the next sections is as follows. In section \ref{sec:sec3} we give the proofs of Propositions \ref{prop1} and \ref{prop2}, as well as Theorems \ref{th1} and \ref{th1bis}. In section \ref{sec:sec4} we recall that the minimizers of \eqref{ene} are continuous, and we also establish the validity of Pohozaev identity for minimizers of \eqref{ene}. This identity is crucial in the proof of Lemma \ref{pl3}.

\section{Proofs of Propositions \ref{prop1} and \ref{prop2}, and Theorems \ref{th1} and \ref{th1bis}}\label{sec:sec3}

We first establish the existence of a radial minimizer of $J_{B_R}$ in the class $A_{R}$.

\begin{lemma}\label{pl1}
Under the assumptions of Proposition \ref{prop1}:
\begin{itemize}
\item There exists a minimizer $\psi_R$ of $J_{B_R}$ in the class $A_{R}$, which is radial (i.e. $\psi_R(x)=\psi_{R,\mathrm{rad}}(|x|)$, $\forall x\in B_R$), and continuous on $\overline{B_R}$.
\item For such a radial minimizer, the function $\psi_{R,\mathrm{rad}}$ is nondecreasing on the interval $[0,R]$.
\end{itemize}
\end{lemma}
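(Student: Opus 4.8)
The plan is to obtain a minimizer by the direct method, and then to show that it can be replaced by a radial, nondecreasing one with no greater energy, so that a radial minimizer necessarily exists and every radial minimizer is monotone. First I would check that $J_{B_R}$ is well defined and finite on $A_R$ (the potential $\tilde W_{\mathrm{rad}}$ is bounded by $\mathbf{H_2}$, and the affine function equal to $q$ on $\overline{B_R}$ is an admissible competitor), coercive on $A_R$ with respect to the $W^{1,2}$ norm after subtracting the boundary data, and weakly lower semicontinuous: the Dirichlet term is convex, hence weakly lower semicontinuous, and for the potential term I would use that $\tilde W_{\mathrm{rad}}$ is lower semicontinuous and bounded below by $0$, so along a subsequence converging a.e.\ Fatou's lemma gives $\liminf \int \tilde W_{\mathrm{rad}}(h_k) \geq \int \tilde W_{\mathrm{rad}}(h)$. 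A minimizing sequence is bounded in $W^{1,2}$, a weakly convergent subsequence has a limit in $A_R$ (the class is weakly closed, being an affine subspace cut out by a continuous trace constraint), and the limit is a minimizer.

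Next I would produce a radial minimizer. Given any minimizer $h$, the symmetric decreasing / Schwarz-type rearrangement is not quite the right tool here because we want a \emph{nondecreasing} radial profile with prescribed boundary value $q$; instead I would pass to the radial decreasing rearrangement of the \emph{deficit} $q-h$ truncated to $[0,q]$, i.e.\ I would work with the rearrangement $(q-h)^*$ of $q-h$ (which is a nonnegative function once we note that we may assume $0 \le h \le q$: replacing $h$ by $\min(\max(h,0),q)$ does not increase $J_{B_R}$ since $\tilde W_{\mathrm{rad}}$ is nondecreasing and the truncation does not increase the Dirichlet energy). Then $h^\sharp := q - (q-h)^*$ is radial, takes the value $q$ on $\partial B_R$, is nondecreasing in $|x|$, and by the Pólya–Szegő inequality $\int |\nabla h^\sharp|^2 \le \int |\nabla h|^2$, while $\int \tilde W_{\mathrm{rad}}(h^\sharp) = \int \tilde W_{\mathrm{rad}}(h)$ because $\tilde W_{\mathrm{rad}}$ is monotone and rearrangement preserves the distribution function (hence the integral of any monotone function of $q-h$). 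Therefore $J_{B_R}(h^\sharp) \le J_{B_R}(h)$, so $h^\sharp$ is again a minimizer, radial and nondecreasing. Continuity on $\overline{B_R}$ then follows from Lemma \ref{lem2} (local continuity of minimizers in the interior, applied with the bounded potential $\tilde W_{\mathrm{rad}}$) together with a boundary estimate: near $\partial B_R$ one compares with a barrier, or more simply uses that a bounded, monotone radial $W^{1,2}$ function with $\int_0^R (r^{n-1})\,|\psi_{R,\mathrm{rad}}'|^2\,\dd r <\infty$ is automatically continuous up to $r=R$ with $\psi_{R,\mathrm{rad}}(R)=q$.

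Finally, the second bullet: I would show that \emph{every} radial minimizer $\psi_R$ has nondecreasing profile, not merely that one such exists. This follows from the strict part of the rearrangement inequality: if $\psi_{R,\mathrm{rad}}$ were not (essentially) nondecreasing, then $q-\psi_{R,\mathrm{rad}}$ is a radial function that is not equimeasurable with its own decreasing rearrangement in a way compatible with equality in Pólya–Szegő, so $\int |\nabla \psi_R^\sharp|^2 < \int |\nabla \psi_R|^2$ (for radial functions, equality in Pólya–Szegő forces monotonicity up to the trivial ambiguities), contradicting minimality since the potential term is unchanged. The main obstacle I anticipate is this last point — extracting \emph{strict} monotonicity of every radial minimizer from the rearrangement inequality, being careful about level sets of positive measure where $\tilde W_{\mathrm{rad}}$ is flat (e.g.\ $\tilde W_{\mathrm{rad}} \equiv 0$ near $0$), since there the potential term is genuinely insensitive to rearrangement and one must argue purely from the Dirichlet term; the cleanest route is probably to observe that a radial minimizer solves the one-dimensional Euler–Lagrange inequality (or a variational inequality where $\tilde W_{\mathrm{rad}}$ is not differentiable) on the region where $0 < \psi_{R,\mathrm{rad}} < q$, from which monotonicity is immediate, and to treat the contact sets $\{\psi_{R,\mathrm{rad}}=0\}$ and $\{\psi_{R,\mathrm{rad}}=q\}$ separately.
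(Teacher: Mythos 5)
Your existence argument and your construction of a radial, nondecreasing minimizer are sound, but they follow a genuinely different route from the paper. The paper symmetrizes by iterated reflections: it builds minimizers invariant under the dihedral groups $D_{2^k}$ in each coordinate plane, passes to the limit $k\to\infty$ using weak lower semicontinuity of the Dirichlet term and Fatou's lemma for the (lower semicontinuous) potential, and after $n-1$ such inductive steps obtains a fully radial minimizer. Your monotone rearrangement of the deficit $q-h$ is a legitimate shortcut: after truncating to $0\le h\le q$ one has $q-h\in W^{1,2}_0(B_R)$ nonnegative, P\'olya--Szeg\H{o} controls the Dirichlet term, and equimeasurability alone (monotonicity of $\tilde W_{\mathrm{rad}}$ is not even needed) preserves $\int \tilde W_{\mathrm{rad}}(h)$, so $h^\sharp=q-(q-h)^*$ is an admissible radial nondecreasing minimizer. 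The paper's reflection argument is longer but self-contained and reappears later (in the proof of Proposition \ref{prop1} it is used to radialize an arbitrary minimizer along a fixed ray), which is partly why the author sets it up this way.

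The genuine gap is in the second bullet, and you have correctly located it yourself: you need that \emph{every} radial minimizer has a nondecreasing profile, and neither of your two proposed routes closes this. The rigidity (equality) case of P\'olya--Szeg\H{o} requires, \`a la Brothers--Ziemer, that the critical set of the rearranged function be negligible on the relevant range of values; plateaus can and do occur here, and the potential term gives you no leverage since it is rearrangement-invariant. The Euler--Lagrange fallback is worse: $\tilde W_{\mathrm{rad}}$ is merely lower semicontinuous and nondecreasing (it may be the characteristic function of $(0,q]$, cf.\ Remark \ref{rem1}), so there is no ODE on $\{0<\psi_{R,\mathrm{rad}}<q\}$ and ``monotonicity is immediate'' is not justified. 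The paper's resolution is elementary and avoids all of this: if $\psi_{R,\mathrm{rad}}(r)>\psi_{R,\mathrm{rad}}(s)$ for some $r<s$, compare with the truncation $\zeta=\psi_R$ on $\{s\le|x|\le R\}$ and $\zeta=\min(\psi_R,\psi_{R,\mathrm{rad}}(s))$ on $\{|x|\le s\}$; the potential term does not increase because $\tilde W_{\mathrm{rad}}$ is nondecreasing, and the Dirichlet term strictly decreases because $\psi_R$ is continuous and nonconstant on the nonempty open set $\{x\in B_s:\psi_R(x)>\psi_{R,\mathrm{rad}}(s)\}$, whose gradient is discarded. You should replace your rigidity/Euler--Lagrange discussion by this two-line competitor argument; with that change the proof is complete.
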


\begin{proof}  
Let $\tilde \psi$ be a minimizer of $J_{B_R}$ in the class $A_R:=\{h\in W^{1,2}(\Omega;\R): \ h=q \text{ on } \partial B_R\}$. We first notice that $0\leq \tilde\psi\leq q$, since otherwise the competitor $\min (\tilde \psi^+,q)\in A_R$ has less energy. We also know that $\tilde\psi$ is continuous in $B_R$ (cf. Lemma \ref{lem2}). Starting from $\tilde \psi$, we can construct 
\begin{equation*}
\tilde \psi_0^{(1)}(x)=\tilde \psi(|x_1|,x_2,\ldots,x_n),
\end{equation*}
which is another minimizer of $J_{B_R}$ in $A_R$, invariant by the reflection $(x_1,x_2,\ldots,x_n)\mapsto(-x_1,x_2,\ldots,x_n)$. 
Indeed, we have $ J_{B_R\cap \{x_1>0\}}(\tilde\psi)= J_{B_R\cap \{x_1<0\}}(\tilde\psi) $, since otherwise either the competitor $x\mapsto \tilde \psi(-|x_1|,x_2,\ldots,x_n)$ or the competitor $\tilde \psi_0^{(1)}$ has less energy than $\tilde\psi$. 
Similarly, we can construct a minimizer 
\begin{equation*}
\tilde \psi_1^{(1)}(x)=\tilde \psi(|x_1|,|x_2|,\ldots,x_n),
\end{equation*}
which coincides with $\tilde\psi$ on $\{ x\in B_R: x_1>0, x_2>0\}$, and is invariant by the reflections $(x_1,x_2,\ldots,x_n)\mapsto(-x_1,x_2,\ldots,x_n)$ and $(x_1,x_2,\ldots,x_n)\mapsto(x_1,-x_2,\ldots,x_n)$. By repeating this process, we obtain for every $k\geq 2$, a minimizer $\tilde\psi_k^{(1)}$, which coincides with $\tilde\psi$ on $\{ x\in B_R:0< x_2<\tan(\pi/2^k)x_1\}$, and is invariant by the dihedral group $D_{2^k}$ generated by the reflections with respect to the hyperlanes $x_2=0$, and $x_2=\tan(\pi/2^k)x_1$. It is clear that $\|\tilde \psi_k^{(1)}\|_{W^{1,2}(B_R;\R)}$ is uniformly bounded, thus (up to subsequence) we have 
\begin{equation*}
\tilde\psi_k^{(1)} \rightharpoonup \tilde\psi_\infty^{(1)} \text{ in } W^{1,2}(B_R;\R), \text{ and } \tilde\psi_k^{(1)} \to\tilde\psi_\infty^{(1)} \text{ a.e. in } B_R.
\end{equation*}
By the weakly lower continuity of the $L^2$ norm, it follows that
\begin{subequations}
\begin{equation}
\int_{B_R} |\nabla\tilde  \psi_\infty^{(1)}|^2\leq \liminf_{k\to\infty}\int_{B_R} |\nabla \tilde\psi_k^{(1)}|^2,
\end{equation}
while by Fatou's lemma and the lower semicontinuity of $\tilde W_{\mathrm{rad}}$, we get
\begin{equation}
\int_{B_R} \tilde W_{\mathrm{rad}}(\tilde\psi_\infty^{(1)})\leq \int_{B_R}\liminf_{k\to\infty}\tilde W_{\mathrm{rad}} (\tilde \psi_k^{(1)}) \leq \liminf_{k\to\infty}\int_{B_R}\tilde W_{\mathrm{rad}} (\tilde \psi_k^{(1)}).
\end{equation}
\end{subequations}
As a consequence, $\tilde\psi_\infty^{(1)}$ is another minimizer of $J_{B_R}$ in $A_R$. 
By construction, given $x\in B_R$, such that $l_{12}:=\sqrt{x_1^2+x_2^2}$, we have 
\begin{multline*}
|\tilde\psi_k^{(1)}(x_1,x_2,x_3,\ldots,x_n)-\tilde\psi(l_{12},0,x_3,\ldots,x_n)|\leq \\
\sup \Big\{|\tilde \psi(z_1,z_2,x_3,\ldots,x_n)-\tilde\psi(l_{12},0,x_3,\ldots,x_n)|: \sqrt{(z_1^2-l_{12})^2+z_2^2}\leq \frac{\pi l_{12}}{2^k}\Big\}.
\end{multline*}
Therefore, letting $k\to\infty$, it follows that $\tilde\psi_\infty^{(1)}(x_1,x_2,x_3,\ldots,x_n)=\tilde\psi(\sqrt{x_1^2+x_2^2},0,x_3,\ldots,x_n)$. 

Next, we proceed by induction, and starting from $\tilde\psi_\infty^{(1)}$, we consider for every $k\geq 2$, a minimizer $\tilde\psi_k^{(2)}$, which coincides with $\tilde\psi_\infty^{(1)}$ on $\{ x\in B_R:0< x_3<\tan(\pi/2^k)x_1\}$, and is invariant by the dihedral group $D_{2^k}$ generated by the reflections with respect to the hyperlanes $x_3=0$, and $x_3=\tan(\pi/2^k)x_1$. As previously $\tilde\psi_\infty^{(2)}:=\lim_{k\to\infty}\tilde\psi_k^{(2)}$ is a minimizer of $J_{B_R}$ in $A_R$, such that
$\tilde\psi_\infty^{(2)}(x_1,x_2,x_3,x_4,\ldots,x_n)=\tilde\psi_\infty^{(1)}(\sqrt{x_1^2+x_3^2},x_2,0,x_4,\ldots,x_n)=\tilde\psi(\sqrt{x_1^2+x_2^2+x_3^2},0,0,x_4,\ldots,x_n) $. The process terminates after a finite number of exactly $n-1$ steps, when we get a minimizer $\psi_R:=\tilde \psi_\infty^{(n-1)}$ of $J_{B_R}$ in $A_R$, such that $\psi_R(x_1,x_2,\ldots,x_n)=\tilde\psi(\sqrt{x_1^2+\ldots+x_n^2},0,\ldots,0)$.

Given a radial radial minimizer $\psi_R$ of $J_{B_R}$ in the class $A_{R}$, we can easily see by contradiction that the function $\psi_{R,\mathrm{rad}}$ is nondecreasing on the interval $[0,R]$. Indeed, assume that $\psi_{R,\mathrm{rad}}(r)>\psi_{R,\mathrm{rad}}(s)$ holds for some $0\leq r<s\leq R$. Then, the competitor
\begin{equation}
\zeta(x):=\begin{cases}
\psi_R(x) &\text{if } s\leq |x|\leq R,\\
\min(\psi_R(x),\psi_{R,\mathrm{rad}}(s)) &\text{if }|x|\leq s,
\end{cases}
\end{equation}
has less energy than $\psi_R$, which is impossible.
Finally, in view of the monotonicity of $\psi_{R,\mathrm{rad}}$, the continuity of $\psi_R$ up to $\overline{B_R}$ is clear.
\end{proof}

In the next Lemma, we consider a perturbation of the functional $J_{B_R}$ for $\lambda\in (0,1)$ (cf. \eqref{enel}). We shall use the corresponding comparison functions $\psi_R^\lambda$ provided by Lemma \ref{pl1}, to obtain an upper bound for the modulus of the local minimizer $u$ considered in Theorem \ref{th1}.

\begin{lemma}\label{pl2}
We assume that hypotheses $\mathbf{H_1}$-$\mathbf{H_3}$ hold, and that the map $u\in W^{1,2}_{\mathrm{loc}}(\Omega;\R^m)$ satisfies \eqref{qbound} and \eqref{enemin}.
Given $\lambda \in (0,1)$, let
\begin{equation}\label{enel}
J_{B_R}^\lambda (h):=\int_{B_R} \Big[\frac{1}{2}|\nabla h(x)|^2+\lambda \tilde W_{\mathrm{rad}}(h(x))\Big]\dd x,
\end{equation}
and consider a radial minimizer $\psi_R^\lambda $ of $ J_{B_R}^\lambda$ in the class $A_R$, provided by Lemma \ref{pl1}. 
Then, for every closed ball $\overline{B_R(x_0)}$ contained in $\Omega$, we have
\begin{equation}\label{comppnew}
|u(x)|\leq \psi_R^\lambda(x-x_0), \text{ on }\overline{B_R(x_0)}.
\end{equation}
\end{lemma}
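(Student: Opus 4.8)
After a translation we may take $x_0=0$; write $B_R:=B_R(0)$, $\psi:=\psi_R^\lambda$ and $\rho:=|u|$, and recall that $u$ is continuous (Lemma \ref{lem2}) and $\psi$ is continuous on $\overline{B_R}$ (Lemma \ref{pl1}). The plan is to argue by contradiction, assuming that the open set $A:=\{x\in B_R:\rho(x)>\psi(x)\}$ has positive measure. Since $\psi=q\geq\rho$ on $\partial B_R$, the set $A$ stays away from $\partial B_R$ except on $\{\rho=q\}$, and the functions perturbed below will be continuous up to $\overline{B_R}$ and vanish there. On $A$ one has $\rho>\psi\geq 0$, so $u$ does not vanish; writing $\xi:=u/\rho$ (a unit vector field, $W^{1,2}$ on $A$), the polar identity gives $|\nabla u|^2=|\nabla\rho|^2+\rho^2|\nabla\xi|^2$ a.e.\ on $A$.

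Next I would test the two minimality properties against competitors differing from the respective functions only on $A$: for $u$, the map $v$ equal to $u$ on $B_R\setminus A$ and to $\psi\,\xi$ on $A$, which lies in $W^{1,2}(B_R;\R^m)$, satisfies $\|v\|_{L^\infty}\leq q$, has $v-u$ continuous and vanishing on $\partial B_R$ hence $v-u\in W^{1,2}_0(B_R;\R^m)$, and satisfies $|\nabla v|^2=|\nabla\psi|^2+\psi^2|\nabla\xi|^2$ a.e.\ on $A$; and for $\psi$, the scalar function $\tilde v:=\max(\rho,\psi)\in A_R$, with $|\nabla\tilde v|^2=|\nabla\rho|^2$ a.e.\ on $A$. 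Using $\mathbf{H_3}$ to write $W(w)=W_{\mathrm{rad}}(|w|)+W_0(w)$, inequality \eqref{enemin} for $u$ and the minimality of $\psi$ for $J_{B_R}^\lambda$ over $A_R$ yield $\alpha:=E_{B_R}(v)-E_{B_R}(u)\geq 0$ and $\beta:=J_{B_R}^\lambda(\tilde v)-J_{B_R}^\lambda(\psi)\geq 0$. Adding them, the terms $\tfrac12|\nabla\rho|^2$ and $\tfrac12|\nabla\psi|^2$ cancel and one is left with
\[
\alpha+\beta=\int_A\Big[\tfrac12(\psi^2-\rho^2)\,|\nabla\xi|^2+(1-\lambda)\big(W_{\mathrm{rad}}(\psi)-W_{\mathrm{rad}}(\rho)\big)+\big(W_0(\psi\xi)-W_0(\rho\xi)\big)\Big]\,\dd x.
\]
On $A$ each of the three bracketed quantities is $\leq 0$: the first because $0\leq\psi<\rho$, the second because $\lambda<1$ and $W_{\mathrm{rad}}$ is nondecreasing, the third by the monotonicity of $W_0$ in $\mathbf{H_3}$. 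Hence $\alpha+\beta\leq 0$, which with $\alpha,\beta\geq 0$ forces $\alpha=\beta=0$ and each bracketed term to vanish a.e.\ on $A$. In particular $\beta=0$ means $\tilde v$ is itself a minimizer of $J_{B_R}^\lambda$ in $A_R$, and — this is where $\lambda<1$ is essential — vanishing of the middle term gives $W_{\mathrm{rad}}(\rho)=W_{\mathrm{rad}}(\psi)$ a.e.\ on $A$, so by monotonicity $W_{\mathrm{rad}}$ is constant on the segment $[\psi(x),\rho(x)]$ for a.e.\ $x\in A$.

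The contradiction then comes from strict convexity along the segment joining $\psi$ to $\tilde v$. For $t\in[0,1]$ set $h_t:=(1-t)\psi+t\tilde v\in A_R$; it takes values in $[0,q]$, equals $\psi$ on $B_R\setminus A$, and lies between $\psi$ and $\rho$ on $A$, so by the previous paragraph $\tilde W_{\mathrm{rad}}(h_t)=\tilde W_{\mathrm{rad}}(\psi)$ a.e.\ for every $t$. Consequently $J_{B_R}^\lambda(h_t)=\int_{B_R}\tfrac12|\nabla h_t|^2\,\dd x+\lambda\int_{B_R}\tilde W_{\mathrm{rad}}(\psi)\,\dd x$ is a strictly convex function of $t$: its second derivative equals $\int_{B_R}|\nabla(\tilde v-\psi)|^2\,\dd x>0$, because $\tilde v-\psi$ is continuous on $\overline{B_R}$, vanishes on $\partial B_R$, and is strictly positive on the nonempty open set $A$, so it cannot have a.e.\ vanishing gradient. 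But $J_{B_R}^\lambda(h_0)=J_{B_R}^\lambda(\psi)$ and $J_{B_R}^\lambda(h_1)=J_{B_R}^\lambda(\tilde v)$ both equal $\min_{A_R}J_{B_R}^\lambda$, so strict convexity gives $J_{B_R}^\lambda(h_{1/2})<\min_{A_R}J_{B_R}^\lambda$, contradicting $h_{1/2}\in A_R$. Therefore $|A|=0$ and, $A$ being open, $A=\emptyset$: thus $|u|\leq\psi_R^\lambda$ on $B_R$, hence on $\overline{B_R}$ by continuity, which is \eqref{comppnew} after translating back.

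I expect the delicate point to be exactly the equality case $\alpha=\beta=0$: a plain comparison of energies does not by itself exclude $|A|>0$, since the $W_{\mathrm{rad}}$–contributions cancel and $|\nabla\rho|$ is not controlled pointwise by $|\nabla\psi|$. It is precisely the strict inequality $\lambda<1$ — which forces $W_{\mathrm{rad}}$ to be flat between $\psi$ and $\rho$ on $A$, making the potential part of $J_{B_R}^\lambda$ constant along $h_t$ — that lets the strict convexity of the Dirichlet integral close the argument.
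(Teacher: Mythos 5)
Your proof is correct, and its first half is essentially the paper's argument: the same polar decomposition $u=\rho\,\xi$, the same pair of competitors (the truncated map $\min(\psi_R^\lambda,\rho)\xi$ tested against the minimality of $u$, and $\max(\rho,\psi_R^\lambda)=\psi_R^\lambda+(\rho-\psi_R^\lambda)^+$ tested against the minimality of $\psi_R^\lambda$), and the same cancellation leading to the vanishing, a.e.\ on $\{\rho>\psi_R^\lambda\}$, of the three signed terms $\tfrac12(\psi^2-\rho^2)|\nabla\xi|^2$, $(1-\lambda)(W_{\mathrm{rad}}(\psi)-W_{\mathrm{rad}}(\rho))$ and $W_0(\psi\xi)-W_0(\rho\xi)$. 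Where you genuinely depart from the paper is the endgame. The paper upgrades the a.e.\ identity $W_{\mathrm{rad}}(\psi)=W_{\mathrm{rad}}(\rho)$ to a \emph{pointwise constancy} on each connected component of $\{\rho>\psi\}$ (via lower semicontinuity and the monotone radial structure of $\psi_R^\lambda$), then uses one-sided variations to show $\psi_R^\lambda$ is superharmonic and $\rho$ subharmonic there, and closes with the maximum principle. You instead observe that the vanishing of $\beta$ makes $\max(\rho,\psi)$ a second minimizer of $J^\lambda_{B_R}$, that the a.e.\ flatness of $W_{\mathrm{rad}}$ on $[\psi(x),\rho(x)]$ makes the potential part of $J^\lambda_{B_R}$ constant along the segment $h_t=(1-t)\psi+t\max(\rho,\psi)$, and that the strict convexity of the Dirichlet integral (since $(\rho-\psi)^+\in W^{1,2}_0$ is nonzero) then contradicts the minimality of the endpoints. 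This is cleaner: it needs only the a.e.\ information, avoids the delicate pointwise argument for the paper's \eqref{claim2}, and avoids invoking the maximum principle for merely $W^{1,2}$ sub/superharmonic functions on a possibly irregular component. The one place where you are too terse is the assertion that the vector competitor $v$ (equal to $\psi\xi$ on $A$ and to $u$ elsewhere) belongs to $W^{1,2}(B_R;\R^m)$: the delicate case is when $\psi_R^\lambda$ has a dead core $\overline{B_{R'}}$ on which $u$ may be nonzero, so that $\xi$ is not controlled there; the paper devotes a paragraph to this (showing $v\in W^{1,2}_{\mathrm{loc}}$ on $\{\psi>0\}$, bounding $\int|\nabla v|^2$ by $\int|\nabla u|^2+\int|\nabla\psi|^2$, and gluing with $v\equiv 0$ on the dead core), and your write-up should include a comparable justification.
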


\begin{proof}     
Without loss of generality, we assume that $x_0=0$. We recall that $u$ is continuous on $\Omega$ (cf. Lemma \ref{lem2}), and consider on the open set $\Omega_0:=\{x\in\Omega: u(x)\neq 0\}$ the polar form:
\begin{equation}
u(x)=\rho(x)\n(x), \text{ with } \rho(x):=|u(x)|, \ \n(x):=\frac{u(x)}{|u(x)|}.
\end{equation}
An easy computation shows that
\begin{equation}
|\nabla u(x)|^2=|\nabla\rho(x)|^2+|\rho(x)|^2|\nabla \n(x)|^2 \text{ on } \Omega_0.
\end{equation}

Next, we define on $\overline{B_R}$ the comparison map:
\begin{equation}\label{compu}
\tilde u(x)=\begin{cases}
u(x) &\text{when } \rho(x)\leq\psi_R^\lambda(x)\\
\psi_R^\lambda(x)\n &\text{when } \rho(x)>\psi_R^\lambda (x),\end{cases}
\end{equation}
where $\psi_R^\lambda$ is a radial minimizer of $ J_{B_R}^\lambda$ in the class $A_R$, provided by Lemma \ref{pl1}. 
It is obvious that $u=\tilde u$ on $\partial B_R$. One can also check that $|\tilde u|\leq|u|$ holds on $\overline{B_R}$, and $\tilde u$ is continuous on $\overline{B_R}$. Our claim is that $\tilde u \in W^{1,2}(B_R;\R^m)$. Let $U:=\{x\in B_R: \psi_R(x)>0\}$. We notice that either 
$U=B_R$, or $U=\{x: R'<|x|<R\}$, for some $R'\in (0,R)$. Now, given $x\in \Omega_0\cap U$, it is clear that $\tilde u(x)=\frac{\min(\psi_R(x),\rho(x))}{\rho(x)}u(x)$ holds in an open neighbourhood $V_x$ of $x$, where $\rho(x)\geq\epsilon>0$. As a consequence, $\tilde u \in W^{1,2}(V_x;\R^m)$, as a product of maps belonging to $W^{1,2}(V_x;\R^m)\cap L^\infty(V_x;\R^m)$. Otherwise, if $u$ vanishes for some $x\in U$, we have $\tilde u=u$ in a neighbourhood of $x$. This proves that $\tilde u \in W^{1,2}_{\mathrm{loc}}(U;\R^m)$. Moreover, setting $\tilde\rho(x)=|\tilde u(x)|=\min(\psi_R(x),\rho(x))$, we compute
\begin{align*}
\int_U|\nabla \tilde u|^2&=\int_{\Omega_0\cap U}(|\nabla\tilde\rho|^2+\tilde\rho^2|\nabla n|^2) \\
&\leq \int_{\Omega_0\cap U}( |\nabla\rho|^2+\rho^2|\nabla n|^2)+\int_{B_R}|\nabla \psi_R|^2=\int_U|\nabla u|^2+\int_{B_R}|\nabla \psi_R|^2<\infty ,
\end{align*}
thus $\tilde u \in W^{1,2}(U;\R^m)$. Finally, in the case where $U\neq B_R$ i.e. $U=\{x: R'<|x|<R\}$, we have $\tilde u\equiv 0$ in $\overline{B_{R'}}$. This proves our claim that $\tilde u \in W^{1,2}(B_R;\R^m)$.

At this stage, we utilize the minimality of $u$ to deduce that
\begin{align*}
E_{B_R}(u)&=E_{B_R\cap\{\rho>\psi_R^\lambda\}}(u)+E_{B_R\cap\{\rho\leq \psi_R^\lambda\}}(u)\\
&\leq E_{B_R\cap\{\rho>\psi_R^\lambda\}}(\tilde u)+E_{B_R\cap\{\rho\leq \psi_R^\lambda\}}(u)=E_{B_R}(\tilde u),
\end{align*}
or equivalently
\begin{multline}\label{ga1}
E_{B_R\cap\{\rho>\psi_R^\lambda\}}(u)=\int_{B_R\cap\{\rho>\psi_R^\lambda\}}\Big[\frac{|\nabla\rho|^2}{2}+\frac{|\rho|^2|\nabla \n|^2}{2}+W_{\mathrm{rad}}(\rho)+W_0(\rho\n)\Big]\\
\leq \int_{B_R\cap\{\rho>\psi_R^\lambda\}}\Big[\frac{|\nabla\psi_R^\lambda|^2}{2}+\frac{|\psi_R^\lambda|^2|\nabla \n|^2}{2}+W_{\mathrm{rad}}(\psi_R^\lambda)+W_0(\psi_R^\lambda\n)\Big]=E_{B_R\cap\{\rho>\psi_R^\lambda\}}(\tilde u).
\end{multline}
Similarly, by the minimality of $\psi_R^\lambda$, and since $(\rho-\psi_R^\lambda)^+\in W^{1,2}_0(B_R)$, it follows that
\begin{align*}
J_{B_R}^\lambda(\psi_R^\lambda)&=J_{B_R\cap\{\rho>\psi_R^\lambda\}}^\lambda(\psi_R^\lambda)+J_{B_R\cap\{\rho\leq \psi_R^\lambda\}}^\lambda(\psi_R^\lambda)\\
&\leq J_{B_R\cap\{\rho>\psi_R^\lambda\}}^\lambda(\rho)+J_{B_R\cap\{\rho\leq \psi_R^\lambda\}}^\lambda(\psi_R^\lambda)=J_{B_R}^\lambda(\psi_R^\lambda+(\rho-\psi_R^\lambda)^+),
\end{align*}
or equivalently
\begin{align}\label{ga2}
J_{B_R\cap\{\rho>\psi_R^\lambda\}}^\lambda(\psi_R^\lambda)&=\int_{B_R\cap\{\rho>\psi_R^\lambda\}}\Big[\frac{|\nabla\psi_R^\lambda|^2}{2}+\lambda W_{\mathrm{rad}}(\psi_R^\lambda)\Big]\\
&\leq \int_{B_R\cap\{\rho>\psi_R^\lambda\}}\Big[\frac{|\nabla\rho|^2}{2}+\lambda W_{\mathrm{rad}}(\rho)\Big]=J^\lambda_{B_R\cap\{\rho>\psi_R^\lambda\}}(\rho).\nonumber
\end{align}
Gathering the previous results from \eqref{ga1} and \eqref{ga2}, we conclude that
\begin{subequations}
\begin{equation}\label{ii1}
I_1+I_2+I_3+I_4\leq 0
\end{equation}
with
\begin{equation}\label{ii2}
I_1:=\int_{B_R\cap\{\rho>\psi_R^\lambda\}}\Big[\frac{|\nabla\rho|^2}{2}+\lambda W_{\mathrm{rad}}(\rho)-\frac{|\nabla\psi_R^\lambda|^2}{2}-\lambda W_{\mathrm{rad}}(\psi_R^\lambda)\Big]\geq 0 \text{ (cf. \eqref{ga2})},
\end{equation}
\begin{equation}\label{ii3b}
I_2:=\int_{B_R\cap\{\rho>\psi_R^\lambda\}}\frac{(|\rho|^2-|\psi_R^\lambda|^2)}{2}|\nabla \n|^2 \geq 0,
\end{equation}
\begin{equation}\label{ii4}
I_3:=\int_{B_R\cap\{\rho>\psi_R^\lambda\}}(W_0(\rho\n)-W_0(\psi_R^\lambda\n))\geq  0 \text{ (cf. $\mathbf{H_3}$)},
\end{equation}
\begin{equation}\label{ii5b}
I_4:=(1-\lambda)\int_{B_R\cap\{\rho>\psi_R^\lambda\}}(W_{\mathrm{rad}}(\rho)-W_{\mathrm{rad}}(\psi_R^\lambda))\geq 0 \text{ (cf. $\mathbf{H_3}$)}.
\end{equation}
\end{subequations}
Consequently, we have
\begin{subequations}
\begin{equation}\label{ii3}
I_2:=\int_{B_R\cap\{\rho>\psi_R^\lambda\}}\frac{(|\rho|^2-|\psi_R^\lambda|^2)}{2}|\nabla \n|^2 = 0,
\end{equation}
\begin{equation}\label{ii5}
I_4:=(1-\lambda)\int_{B_R\cap\{\rho>\psi_R^\lambda\}}(W_{\mathrm{rad}}(\rho)-W_{\mathrm{rad}}(\psi_R^\lambda))= 0.
\end{equation}
\end{subequations}

Now, let us assume by contradiction that the open set $V:=B_R\cap\{\rho>\psi_R^\lambda\}$ is nonempty, and let $\tilde V$ be a nonempty connected component of $V$. It follows from \eqref{ii3} that $\nabla \n\equiv 0$ holds in $\tilde V$, thus we have $\n \equiv \n_0$ in $\tilde V$, for a unit vector $\n_0\in \R^m$, as well as $u=\rho \n_0 $ in $\tilde V$. Our next claim is that 
\begin{equation}\label{claim2}
W_{\mathrm{rad}}(\psi_R^\lambda)=W_{\mathrm{rad}}(\rho)\equiv \mathrm{Const.} \text{ in } \tilde V.
\end{equation}
Indeed, let us first assume by contradiction that $W_{\mathrm{rad}}(\psi_R^\lambda(x_0))+2\epsilon<W_{\mathrm{rad}}(\rho(x_0))$ holds for some $x_0\in \tilde V$, and $\epsilon>0$. Then, by the lower semicontinuity of $W_{\mathrm{rad}}(\rho)$, we have $W_{\mathrm{rad}}(\psi_R^\lambda(x_0))+\epsilon<W_{\mathrm{rad}}(\rho)$ in an open neighbourhood $\tilde V_{x_0}\subset \tilde V$ of $x_0$. On the other hand, since $W_{\mathrm{rad}}$ is nondecreasing on $[0,q]$, while $|x|\mapsto\psi_R^\lambda(|x|)$ is nondecreasing on $[0,R]$, it is clear that  $W_{\mathrm{rad}}(\psi_R^\lambda)\leq W_{\mathrm{rad}}(\psi_R^\lambda(x_0))$ holds on the set $S:= \{x\in \tilde V_{x_0}: |x|\leq |x_0|\}$ (which has positive Lebesgue measure). As a consequence, we have $W_{\mathrm{rad}}(\rho)-W_{\mathrm{rad}}(\psi_R^\lambda))\geq \epsilon>0$ on $S$, in contradiction with \eqref{ii5}. This proves that $W_{\mathrm{rad}}(\psi_R^\lambda)\equiv W_{\mathrm{rad}}(\rho)$ in $\tilde V$. Next, 
we assume by contradiction that $W_{\mathrm{rad}}(\psi_R^\lambda(x_1))<W_{\mathrm{rad}}(\psi_R^\lambda(x_2))$ holds for some $x_1,x_2\in \tilde V$.  Let $q_1:=\psi_R^\lambda(x_1)$, $q_2:=\psi_R^\lambda(x_2)$ (with $q_1<q_2$, since $W_{\mathrm{rad}}$ is nondecreasing), and let $s:=\max\{ r\in [0,q]: W_{\mathrm{rad}}(r)=q_1\}$. We notice that $s\in [q_1,q_2]$, thus in view of the continuity of $\psi_R^\lambda$, there exists $x_3\in \tilde V$ such that $\psi_R^\lambda(x_3)=s$. By definition of $s$, we have $W_{\mathrm{rad}}(\psi_R^\lambda(x_3))<W_{\mathrm{rad}}(\rho(x_3))$, which is a contradiction. This establishes \eqref{claim2}.

To prove the bound
\begin{equation}\label{comp2}
|u(x)|\leq \psi_R^\lambda(x), \text{ on }\overline{B_R},
\end{equation}
it remains to show that
\begin{equation}\label{claim3}
\Delta \psi_R^\lambda \leq 0, \text{ and } \Delta \rho \geq 0 \text{ in } \tilde V.
\end{equation}
Indeed, since the boundary condition $\rho-\psi_R^\lambda\leq 0$ is satisfied on $\partial \tilde V$, the maximum principle would give that $\rho\leq \psi_R^\lambda$ holds in $\tilde V$, in contradiction with our assumption that $\tilde V$ is nonempty. To check \eqref{claim3}, we utilize the minimality of $u$ and $\psi_R^\lambda$, as well as \eqref{claim2}. Given $x_0\in \tilde V$, let $s:=\psi_R^\lambda(x_0)$, $t:=\rho (x_0)$, and $2\kappa:=t-s>0$. In view of \eqref{claim3} is is clear that $W_{\mathrm{rad}}$ is constant on $[s,t]$. Let also $\tilde V_{x_0}\subset \tilde V$ be an open neighbourhood of $x_0$, such that  $\psi_R^\lambda\leq s+\kappa$ and $\rho\geq t-\kappa$ hold in $\tilde V_{x_0}$. Now, given $\phi\in C^1_0(\R^n;\R)$, such that $\supp\phi\subset\tilde V_{x_0}$, and $0\leq\phi\leq \kappa$, we have for every $\epsilon \in (0,1)$:
\begin{subequations}\label{harm2}
\begin{equation}
\frac{J_{B_R}^\lambda (\psi_R^\lambda+\epsilon\phi)-J_{B_R}^\lambda (\psi_R^\lambda)}{\epsilon}=\int_{B_R}\frac{|\nabla\psi_R^\lambda+\epsilon\nabla\phi|^2-|\nabla\psi_R^\lambda|^2}{2\epsilon}\geq 0,
\end{equation}
\begin{equation}
\frac{E_{B_R} (u-\epsilon\phi\n_0)-E_{B_R}^\lambda (u)}{\epsilon}=\int_{B_R}\frac{|\nabla\rho-\epsilon\nabla\phi|^2-|\nabla\rho|^2}{2\epsilon}+\int_{B_R}(W_0((\rho-\epsilon\phi)\n_0)-W_0(\rho\n_0))\geq 0.
\end{equation}
\end{subequations}
Finally, since $W_0((\rho-\epsilon\phi)\n_0)\leq W_0(\rho\n_0)$, we let $\epsilon \to 0$ in \eqref{harm2}, and deduce that
\begin{subequations}\label{harm3}
\begin{equation}
\int_{B_R}\nabla\psi_R^\lambda\cdot\nabla\phi\geq 0,\text{ i.e. $\psi_R^\lambda$ is superharmonic in $\tilde V_{x_0}$,}
\end{equation}
\begin{equation}
\int_{B_R}\nabla\rho\cdot\nabla\phi\leq 0,\text{ i.e. $\rho$ is subharmonic in $\tilde V_{x_0}$.}
\end{equation}
\end{subequations}
This establishes \eqref{claim3}, and completes the proof of \eqref{comp2}.
\end{proof}

Now, we are able to complete the proofs of Proposition \ref{prop1}, as well as Theorems \ref{th1} and \ref{th1bis}.

\begin{proof}[Proof of Proposition \ref{prop1}]
For every $\lambda >0$, let $\psi_R^\lambda $ be a radial minimizer of $ J_{B_R}^\lambda$ in the class $A_R$, provided by Lemma \ref{pl1}.
We first notice that $\psi_R^\lambda$ is uniformly bounded in $W^{1,2}(B_R)$, provided that $\lambda$ remains bounded. Thus, as $\lambda\to 1$ and $\lambda <1$, we have (up to subsequence):
\begin{equation}\label{lim11}
\psi_R^{\lambda} \rightharpoonup \overline{\zeta} \text{ in } W^{1,2}(B_R;\R), \text{ and } \psi_R^{\lambda} \to \overline{\zeta} \text{ a.e. in } B_R.
\end{equation}
In addition, by the weakly lower continuity of the $L^2$ norm, it follows that
\begin{subequations}
\begin{equation}
\int_{B_R} |\nabla \overline{\zeta}|^2\leq \liminf_{\lambda\to 1^-}\int_{B_R} |\nabla \psi_R^\lambda|^2,
\end{equation}
while by Fatou's lemma and the lower semicontinuity of $W_{\mathrm{rad}}$, we get
\begin{equation}
\int_{B_R} W_{\mathrm{rad}}(\overline{\zeta})\leq \int_{B_R}\liminf_{\lambda\to 1^-} W_{\mathrm{rad}} (\psi_R^{\lambda}) \leq \liminf_{\lambda\to 1^-}\int_{B_R}\lambda W_{\mathrm{rad}} ( \psi_R^\lambda).
\end{equation}
\end{subequations}
Finally, in view of the minimality of $\psi_R^\lambda$, we deduce that
\begin{subequations}
\begin{equation}
J_{B_R}^\lambda (\psi_R^1)\geq J_{B_R}^\lambda (\psi_R^\lambda),
\end{equation}
\begin{equation}
J_{B_R}(\psi_R^1)= \liminf_{\lambda\to 1^-}J_{B_R}^\lambda (\psi_R^1)\geq  \liminf_{\lambda\to 1^-} J_{B_R}^\lambda (\psi_R^\lambda) \geq J_{B_R}(\overline{\zeta}).
\end{equation}
\end{subequations}
That is, $\overline{\zeta}$ is a minimizer of $J_{B_R}$ in the class $A_R$. Moreover, by construction $\overline{\zeta}$ is radial, and nondecreasing as a function of $|x|$. It remains to establish that $\overline{\zeta}$ also satisfies property (iii) of Proposition \ref{prop1}. Indeed, if $\tilde \psi$ is another minimizer of $ J_{B_R}$ in $A_R$, we have in view of Lemma \ref{pl2} applied to $\tilde \psi$ instead of $u$:
\begin{equation}
\tilde \psi(x)\leq \psi_R^\lambda(x), \forall x\in \overline{B_R}, \forall \lambda\in (0,1)\Rightarrow \tilde \psi(x)\leq \overline{\zeta}(x),  \forall x\in \overline{B_R}.
\end{equation}
Therefore, $\overline{\zeta}$ is the minimizer $\overline{\Psi}_R$ described in Proposition \ref{prop1}, which is uniquely determined by property (iii).

Similarly, by taking the limit of the minimizers $\psi_R^\lambda$, as $\lambda \to 1$ and $\lambda>1$, we obtain a radial minimizer $\underline{\zeta}$ of $J_{B_R}$ in the class $A_R$. By construction, we have (up to subsequence):
\begin{equation}\label{lim22}
\lim_{\lambda\to 1^+}\psi_R^{\lambda} = \underline{\zeta} \text{ a.e. in } B_R.
\end{equation}
It remains to establish that $\underline{\zeta}$ also satisfies property (iii) of Proposition \ref{prop1}. To see this, let $\tilde \psi$ be another minimizer of $ J_{B_R}$ in $A_R$. As in the proof of Lemma \ref{pl1}, we can construct for every unit vector $\nu \in \R^n$, a radial minimizer $\tilde \psi_\nu$ of $ J_{B_R}$ in $A_R$, such that $\tilde \psi(s \nu)=\tilde \psi_\nu(s\nu)$, $\forall s \in[0,R]$. Next, we apply Lemma \ref{pl2} with the radial comparison function $\tilde \psi_\nu$, and the potential 
$$W(u)=\lambda W_{\mathrm{rad}}(|u|),  \ W_0(u)=(\lambda -1) W_{\mathrm{rad}}(|u|), \ \lambda>1,$$
to $u=\psi_R^\lambda$, ($\lambda>1$), and deduce that:
\begin{equation}
\psi_R^\lambda(x)\leq \tilde \psi_\nu(x), \forall x\in \overline{B_R}, \forall \lambda>1, \forall \nu \in\SF^{n-1}\Rightarrow \underline{\zeta}(x)\leq \tilde \psi(x),  \forall x\in \overline{B_R}.
\end{equation}
Therefore, $\underline{\zeta}$ is the minimizer $\underline{\Psi}_R$ described in Proposition \ref{prop1}, which is uniquely determined by property (iii).
\end{proof}

\begin{proof}[Proof of Theorem \ref{th1}]
The desired bound \eqref{compp} follows by letting $\lambda\to 1$ (with $\lambda<1$) in \eqref{comppnew}, and using \eqref{lim11}.
\end{proof}

\begin{proof}[Proof of Theorem \ref{th1bis}]
We consider on $B_R\cap\Omega$ the comparison map:
\begin{equation}\label{compu}
\tilde u(x)=\begin{cases}
u(x) &\text{when } \rho(x)\leq\psi_R^\lambda(x)\\
\psi_R^\lambda(x)\n &\text{when } \rho(x)>\psi_R^\lambda (x),\end{cases}
\end{equation}
and reproduce the arguments in the proof of Theorem \ref{th1}.
\end{proof}

From Lemma \ref{pl2} and Proposition \ref{prop1}, we also deduce the following useful result:
\begin{lemma}\label{pl3a}
For every $R>0$, and $\lambda>0$, we consider the functional $$J_{B_R}^\lambda (h):=\int_{B_R} \Big[\frac{1}{2}|\nabla h(x)|^2+\lambda \tilde W_{\mathrm{rad}}(h(x))\Big]\dd x,$$ and the corresponding comparison functions $\underline{\Psi}_R^\lambda$ and $\overline{\Psi}_R^\lambda$ provided by Proposition \ref{prop1}. Then, we have
\begin{itemize}
\item[(a)]  $\underline{\Psi}_R^\lambda(x)=\underline{\Psi}_{\frac{R}{\kappa}}^{\kappa^2\lambda}(\frac{x}{\kappa})$, and $\overline{\Psi}_R^\lambda(x)=\overline{\Psi}_{\frac{R}{\kappa}}^{\kappa^2\lambda}(\frac{x}{\kappa})$, $\forall x \in B_R$, $\forall \kappa,\lambda>0$.
\item[(b)]  $\underline{\Psi}_R^\mu\leq\overline{\Psi}_R^\mu\leq \underline{\Psi}_R^\lambda\leq \overline{\Psi}_R^\lambda$,  provided that $0<\lambda< \mu$.
\item[(c)] $\overline{\Psi}_{\frac{R}{\kappa}}^1(\frac{x}{\kappa})\leq \underline{\Psi}_R^1(x)$, $\forall x\in B_R$, $\forall \kappa \in (0,1)$.
\item[(d)] There exists a countable set $D\subset (0,\infty)$, such that for every $R\in (0,\infty)\setminus D$, we have $\underline{\Psi}_R=\overline{\Psi}_R$, and thus the minimizer of $J_{B_R}$ in the class $A_R$ is unique.
\end{itemize}
 \end{lemma}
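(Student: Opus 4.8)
The plan is to establish parts (a)--(c) by direct scaling/monotonicity arguments, and then derive (d) from (b) and (c) by a measure-theoretic counting argument. For part (a), I would use the change of variables $y = x/\kappa$, under which $\nabla_x h(x) = \kappa^{-1}\nabla_y (h(\kappa y))$ and $\dd x = \kappa^n \dd y$, so that $J_{B_R}^\lambda(h) = \kappa^{n-2}J_{B_{R/\kappa}}^{\kappa^2\lambda}(\hat h)$ where $\hat h(y):=h(\kappa y)$; this is a bijection between $A_R$ and $A_{R/\kappa}$ preserving the ordering and radial monotonicity, hence it carries the distinguished minimizers $\overline\Psi$, $\underline\Psi$ (which by Proposition \ref{prop1} are characterized by property (iii)) to one another, giving the stated identities. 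For part (b), since $0<\lambda<\mu$ the difference $\mu\tilde W_{\mathrm{rad}} - \lambda\tilde W_{\mathrm{rad}} = (\mu-\lambda)\tilde W_{\mathrm{rad}}$ is nondecreasing on $[0,q]$; then applying Theorem \ref{th1} (via the remark following it, with $W_{\mathrm{rad}}$ replaced by $\lambda W_{\mathrm{rad}}$ and $V_{\mathrm{rad}}$ by $\mu W_{\mathrm{rad}}$, taking $u=\overline\Psi_R^\mu$ and the potential $\lambda\tilde W_{\mathrm{rad}}$) yields $\overline\Psi_R^\mu \le \overline\Psi_R^\lambda$, and similarly $\underline\Psi_R^\mu \le \underline\Psi_R^\lambda$; the inequalities $\underline\Psi_R^\mu\le\overline\Psi_R^\mu$ and $\underline\Psi_R^\lambda\le\overline\Psi_R^\lambda$ are immediate from property (iii) of Proposition \ref{prop1}, and to interpolate $\overline\Psi_R^\mu \le \underline\Psi_R^\lambda$ I would apply Lemma \ref{pl2} with comparison function $\psi_R^\lambda = \underline\Psi_R^\lambda$ (or rather the family $\psi_R^{\lambda'}$, $\lambda'<\lambda$, and pass to the limit as in the proof of Proposition \ref{prop1}) to $u = \overline\Psi_R^\mu$.

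For part (c), combine (a) with $\kappa\in(0,1)$ and (b): by (a), $\overline\Psi_{R/\kappa}^1(x/\kappa) = \overline\Psi_R^{\kappa^2}(x)$, and since $\kappa^2<1$, part (b) gives $\overline\Psi_R^{\kappa^2} \le \underline\Psi_R^1$ on $B_R$ — wait, this needs care: (b) gives $\overline\Psi_R^\mu \le \underline\Psi_R^\lambda$ for $\lambda<\mu$, so with $\lambda=\kappa^2<1=\mu$ we would need $\overline\Psi_R^1 \le \underline\Psi_R^{\kappa^2}$, which is the opposite direction. The correct reading: we want $\overline\Psi_{R/\kappa}^1(x/\kappa)\le\underline\Psi_R^1(x)$; by (a) the left side equals $\overline\Psi_R^{\kappa^2}(x)$; by (b) applied with $\lambda=1<\mu=\kappa^{-2}$... no. I would instead note that by (a), $\overline\Psi_{R/\kappa}^1(x/\kappa)=\overline\Psi_R^{\kappa^{-2}\cdot ?}$; setting $\kappa'=1/\kappa>1$ and rescaling the other way, $\overline\Psi_R^1(x) = \overline\Psi_{R\kappa'}^{(\kappa')^{-2}}(\kappa' x)$, so $\overline\Psi_{R/\kappa}^1(y) = \overline\Psi_R^{\kappa^2}(\kappa y)$ — then the claim $\overline\Psi_R^{\kappa^2}(x)\le\underline\Psi_R^1(x)$ with $\kappa^2<1$ does follow from (b) in the form $\overline\Psi_R^\mu\le\underline\Psi_R^\lambda$ only if $\lambda<\mu$, i.e. with $\mu=\kappa^2$, $\lambda=1$ — impossible. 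So (c) is genuinely a statement that the $R$-dependence is monotone in a way not reducible to (b) alone, and I would prove it directly: by Lemma \ref{pl2}, applied with comparison functions $\psi_{R/\kappa}^{\lambda,\mathrm{rad}}$ extended by $q$ outside $B_{R/\kappa}$ — more simply, restrict $\overline\Psi_R^1$ to the smaller ball $B_{R/\kappa}\subset B_R$ is not in the right class. Instead observe $\underline\Psi_R^1$ restricted to $\partial B_{R/\kappa}$ has some value $\le q$, and on $B_{R/\kappa}$ it minimizes $J^1$ with that boundary datum, so it lies below the minimizer with boundary value $q$, which after rescaling by $\kappa$ is exactly $\overline\Psi_{R/\kappa}^1(\cdot/\kappa)$ — here I use that $\underline\Psi_R^1$ is radial nondecreasing, so $\underline\Psi_R^1\le q$ on $\overline{B_{R/\kappa}}$, together with a comparison (via Lemma \ref{pl2} again, or property (iii)) on the smaller ball.

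For part (d), fix any $R_0>0$ and restrict attention to $R$ in a compact interval $[a,b]\subset(0,\infty)$; it suffices to show $D\cap[a,b]$ is countable for each such interval. For $R<R'$, combining (c) (with $\kappa=R/R'$) gives $\overline\Psi_{R}^1(\cdot)$, suitably rescaled, sits below $\underline\Psi_{R'}^1$, which together with radial monotonicity shows the map $R\mapsto \underline\Psi_R^1(0) =: \underline m(R)$ and $R\mapsto \overline\Psi_R^1(0)=:\overline m(R)$ — the values at the origin, equivalently the "dead-core radii" — are monotone in $R$. Then I would argue that at any $R$ where $\underline\Psi_R \ne \overline\Psi_R$, the two functions differ on a set of positive measure, which forces a genuine jump in a monotone real-valued quantity (such as $\int_{B_R}(\overline\Psi_R^1-\underline\Psi_R^1)$, or the total energy gap, or the dead-core radius); since a monotone function has at most countably many discontinuities, $D\cap[a,b]$ is countable, hence $D$ is countable, and at each $R\notin D$ property (iii) of Proposition \ref{prop1} forces every minimizer of $J_{B_R}$ in $A_R$ to equal the common value $\underline\Psi_R=\overline\Psi_R$. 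The main obstacle is making rigorous the link "at $R\in D$ a monotone scalar invariant jumps": I expect to prove that $R\mapsto \underline\Psi_R^{1,\mathrm{rad}}(r_0)$ is nondecreasing for each fixed $r_0$ (using (a)--(c) and the monotonicity of $\psi_{R,\mathrm{rad}}$), that the analogous quantity for $\overline\Psi$ is also monotone, that $\overline\Psi_{R^-}=\underline\Psi_{R^-}\le\underline\Psi_R\le\overline\Psi_R\le\underline\Psi_{R^+}=\overline\Psi_{R^+}$ by passing to limits $R'\uparrow R$ and $R'\downarrow R$ using (c) and compactness in $W^{1,2}$ (as in the proof of Proposition \ref{prop1}), so that strict inequality $\underline\Psi_R<\overline\Psi_R$ can occur only where the monotone function $R\mapsto\overline m(R)$ (or its analogue) is discontinuous — a countable set.
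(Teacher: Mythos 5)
Your parts (a) and (b) are essentially the paper's argument: (a) is the rescaling identity $J^\lambda_{B_R}(h)=\kappa^{n-2}J^{\kappa^2\lambda}_{B_{R/\kappa}}(h(\kappa\,\cdot))$, and the key inequality $\overline{\Psi}_R^\mu\le\underline{\Psi}_R^\lambda$ is obtained, as in the paper, by feeding $u=\overline{\Psi}_R^\mu$ into Lemma \ref{pl2} with $\underline{\Psi}_R^\lambda$ itself as the radial comparison function (decomposing the potential as $\lambda W_{\mathrm{rad}}+(\mu-\lambda)W_{\mathrm{rad}}$); note that your parenthetical fallback of taking $\psi_R^{\lambda'}$ with $\lambda'<\lambda$ and passing to the limit would only produce $\overline{\Psi}_R^\lambda$, hence the weaker bound $\overline{\Psi}_R^\mu\le\overline{\Psi}_R^\lambda$. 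The genuine error is in (c): you miscompute the scaling and then conclude, wrongly, that (c) is not reducible to (a) and (b). Applying (a) with $\lambda=\kappa^{-2}$, so that $\kappa^2\lambda=1$, gives $\overline{\Psi}_{R/\kappa}^{1}(x/\kappa)=\overline{\Psi}_R^{1/\kappa^2}(x)$, not $\overline{\Psi}_R^{\kappa^2}(x)$. Since $\kappa\in(0,1)$ forces $1/\kappa^2>1$, part (b) with $\mu=1/\kappa^2$ and $\lambda=1$ yields exactly $\overline{\Psi}_R^{1/\kappa^2}\le\underline{\Psi}_R^1$, which is (c). Your substitute direct argument is not salvageable as written: for $\kappa\in(0,1)$ the ball $B_{R/\kappa}$ is \emph{larger} than $B_R$, so the proposed restriction to ``the smaller ball $B_{R/\kappa}\subset B_R$'' and the ensuing comparison with boundary data of $\underline{\Psi}_R^1$ on $\partial B_{R/\kappa}$ do not make sense.

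For (d) your guiding idea (a monotone scalar quantity has at most countably many discontinuities) is the right one, but the execution stops exactly at the decisive step, and the quantities you propose are problematic: the value at the origin alone cannot detect a discrepancy occurring away from the origin, and the monotonicity in $R$ of $\int_{B_R}(\overline{\Psi}_R-\underline{\Psi}_R)$ is not established (the domain of integration changes with $R$). The paper's implementation sidesteps both issues: by (a) one may work on the fixed ball $B_1$ and vary $\lambda$ instead of $R$ (with $R=\sqrt{\lambda}$); one fixes a countable dense set $Q\subset B_1$; for each $x_0\in Q$ the map $\lambda\mapsto\overline{\Psi}_1^\lambda(x_0)$ is nonincreasing by (b), hence has countably many discontinuities; and if $\underline{\Psi}_1^{\lambda_0}\neq\overline{\Psi}_1^{\lambda_0}$, then by continuity they differ at some $x_0\in Q$, and the chain $\overline{\Psi}_1^{\mu}(x_0)\le\underline{\Psi}_1^{\lambda_0}(x_0)<\overline{\Psi}_1^{\lambda_0}(x_0)$ for every $\mu>\lambda_0$ exhibits a jump of that monotone function at $\lambda_0$. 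No limiting argument in $R$ and no equicontinuity of the family $\overline{\Psi}_R$ is then required. (Minor point: for fixed $x_0$ the map $R\mapsto\overline{\Psi}_R^1(x_0)$ is non\emph{increasing}, not nondecreasing as you wrote.)
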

\begin{proof}
(a) follows from a simple rescaling argument. On the other hand, when $\lambda\in (0,1)$, an application of Lemma \ref{pl2} in $B_R$, with $\underline{\Psi}_R^\lambda$ (instead of $\psi_R^\lambda$), and $u=\overline{\Psi}_R^1$, gives the inequality
 $\overline{\Psi}_R^1\leq \underline{\Psi}_R^\lambda$, from which we derive (b) in view of the rescaling in (a). The proof of (c) is obvious from (a) and (b). Next, let $Q$ be a countable dense subset of the unit ball $B_1$. If $\underline{\Psi}_1^\lambda\neq\overline{\Psi}_1^\lambda$, for some $\lambda_0>0$, then there exists $x_0\in Q$, such that the function $(0,\infty)\ni \lambda\mapsto \overline{\Psi}_1^\lambda(x_0)$ is discontinuous at $\lambda_0$. Let $\tilde D$ be the set of $\lambda_0>0$ such that the function $ \lambda\mapsto \overline{\Psi}_1^\lambda(x_0)$ is discontinuous at $\lambda_0$, for some $x_0\in Q$. We notice that $\tilde D$ is countable, since the functions $(0,\infty)\ni \lambda\mapsto \overline{\Psi}_1^\lambda(x_0)$ are nonincreasing. Thus, for $\lambda\in (0,\infty)\setminus D$, we have 
$\underline{\Psi}_1^\lambda\equiv\overline{\Psi}_1^\lambda \Leftrightarrow \underline{\Psi}_{\sqrt{\lambda}}^1\equiv\overline{\Psi}_{\sqrt{\lambda}}^1$, and this proves (d).

\end{proof}

At this stage, we determine in Lemma \ref{pl3} below, the conditions implying the existence of dead core regions for the comparison function $\overline{\Psi}_R$. The proofs of Theorem \ref{th2} and Proposition \ref{prop2} follow immediately from Lemma \ref{pl3}.
\begin{lemma}\label{pl3}
In addition to the assumptions of Proposition \ref{prop1}, we suppose that $W_{\mathrm{rad}}(s)>0$, $\forall s \in (0,q]$. Then, 
\begin{itemize}
\item if $\int_0^q\frac{\dd s}{\sqrt{W_{\mathrm{rad}}(s)}}=\infty$, we have $\underline{\Psi}_R>0$, $\forall x \in\overline{B_R}$, 
\item if $I_q:=\int_0^q\frac{\dd s}{\sqrt{2W_{\mathrm{rad}}(s)}}<\infty$, the function $\overline{\Psi}_R$ vanishes in the ball $\overline{B_{R-\sqrt{2}I_q}}$, provided that $R\geq(4n+\sqrt{2})I_q$.
\end{itemize}
\end{lemma}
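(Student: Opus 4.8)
The plan is to analyze the radial profile $\overline\Psi_{R,\mathrm{rad}}$ and $\underline\Psi_{R,\mathrm{rad}}$ via the one‑dimensional ODE they satisfy on the region where they are positive, together with the monotonicity and rescaling relations from Lemma \ref{pl3a}. On any open annulus where $\Psi:=\overline\Psi_R$ (or $\underline\Psi_R$) is strictly positive, $\widetilde W_{\mathrm{rad}}$ is (locally) a bona fide datum and $\Psi$ minimizes $J_{B_R}$ there, so $\Psi$ is a weak — hence, by the monotone structure, classical away from the free boundary — solution of $\Delta \Psi = W_{\mathrm{rad}}'(\Psi)$ in the distributional sense; in radial variables, writing $\Psi = \phi(r)$, this reads $\phi'' + \tfrac{n-1}{r}\phi' \in \partial W_{\mathrm{rad}}(\phi)$, with $\phi$ nondecreasing. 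The key one‑dimensional estimate is the classical dead‑core bound: if $\phi$ is nonnegative, nondecreasing, and satisfies $\phi'' \ge W_{\mathrm{rad}}'(\phi) \ge 0$ (dropping the favorable first‑order term, legitimate since $\phi'\ge 0$), then multiplying by $\phi'$ and integrating gives $\tfrac12(\phi')^2 \le W_{\mathrm{rad}}(\phi) - W_{\mathrm{rad}}(\phi(r_0))$ between a point $r_0$ where $\phi$ starts to be positive and any later point; this yields $\int_0^{\phi(r)} \tfrac{ds}{\sqrt{2W_{\mathrm{rad}}(s)}} \le r - r_0$, so $\phi$ cannot leave $0$ and climb to height $q$ over a radial interval shorter than $\sqrt2\, I_q$. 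Conversely, when $I_q=\infty$, the same inequality shows that if $\phi$ ever reaches $0$ it must have stayed at $0$ on an interval of infinite length, which is impossible on $[0,R]$; hence $\underline\Psi_R>0$ on $\overline{B_R}$.

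For the first bullet ($I_q=\infty$): I would argue by contradiction. Suppose $\underline\Psi_R(x_1)=0$ for some $x_1$. By radial monotonicity the zero set of $\underline\Psi_{R,\mathrm{rad}}$ is an interval $[0,r^*]$ with $r^*<R$, and on $(r^*,R)$ the profile $\phi$ is positive, smooth, nondecreasing, with $\phi(r^*)=0=\phi'(r^*)$ (the latter by continuity of the gradient of a minimizer across the free boundary — $\underline\Psi_R\in W^{1,2}$ and harmonic‑type matching). Then the energy inequality $\tfrac12(\phi')^2\le W_{\mathrm{rad}}(\phi)$ on $(r^*,R)$ integrates to $\int_0^{\phi(R)}\tfrac{ds}{\sqrt{2W_{\mathrm{rad}}(s)}}\le R-r^*<\infty$; since $\phi(R)=q$, this contradicts $I_q=\infty$. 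One subtlety: $\phi'$ need not be continuous at $r^*$ if $W_{\mathrm{rad}}$ jumps, but $W_{\mathrm{rad}}$ bounded forces $(\phi')^2$ to stay bounded, and the Pohozaev/energy identity from Section \ref{sec:sec4} makes the boundary term at $r^*$ vanish; I would invoke that identity to make the integration rigorous rather than assuming smoothness.

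For the second bullet ($I_q<\infty$): the target is to show $\overline\Psi_{R,\mathrm{rad}}$ vanishes on $[0,R-\sqrt2 I_q]$ once $R\ge(4n+\sqrt2)I_q$. Here the natural route is to build an explicit radial supersolution/competitor. Take the one‑dimensional profile $g$ solving $\tfrac12(g')^2 = W_{\mathrm{rad}}(g)$, i.e. $g$ defined implicitly by $\int_0^{g(t)}\tfrac{ds}{\sqrt{2W_{\mathrm{rad}}(s)}}=t$ for $t\in[0,\sqrt2 I_q]$ and $g\equiv 0$ for $t\le 0$; then $g(\sqrt2 I_q)=q$. Define the radial candidate $h(x)=g(|x|-(R-\sqrt2 I_q))$ on $B_R$; it equals $q$ on $\partial B_R$ and vanishes on $B_{R-\sqrt2 I_q}$. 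Because of the extra curvature term $\tfrac{n-1}{r}h'$, $h$ is not quite a supersolution of $\Delta \Psi = W_{\mathrm{rad}}'(\Psi)$, which is exactly why the factor $4n$ (rather than $1$) appears: one needs $R$ large enough that on the active annulus $r\ge R-\sqrt2 I_q \ge (4n-1)I_q$ the curvature term $\tfrac{n-1}{r}|h'|$ is dominated, and I would instead compare energies directly — show $J_{B_R}(\min(\overline\Psi_R,h))\le J_{B_R}(\overline\Psi_R)$ by a cutting argument identical in spirit to the one in Lemma \ref{pl2} (slice $B_R$ into $\{\overline\Psi_R> h\}$ and its complement, use minimality of $\overline\Psi_R$ against $\min(\overline\Psi_R,h)$ and a matching estimate on the gradient), and use uniqueness in property (iii) of Proposition \ref{prop1} to conclude $\overline\Psi_R\le h$, whence $\overline\Psi_R\equiv 0$ on $B_{R-\sqrt2 I_q}$. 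The main obstacle — and the place where the constant $4n+\sqrt2$ is really consumed — is making $h$ (or a suitable shifted/rescaled variant, via Lemma \ref{pl3a}(a)) a genuine admissible comparison whose energy on the annulus beats that of $\overline\Psi_R$: the curvature term must be absorbed, and one likely needs the freedom to first rescale so that the active band sits far from the origin, then estimate $\int (\tfrac12 (g')^2+W_{\mathrm{rad}}(g))\,r^{n-1}\,dr$ against the energy of $\overline\Psi_R$ restricted to $\{\overline\Psi_R>h\}$, which is where the Pohozaev identity of Section \ref{sec:sec4} enters to control $\overline\Psi_R$'s energy from below.
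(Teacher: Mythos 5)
Your overall intuition is right: the one--dimensional equipartition profile $\beta=\gamma^{-1}$ with $\gamma(s)=\int_0^s(2W_{\mathrm{rad}})^{-1/2}$, the troublesome curvature term $\frac{n-1}{r}\phi'$, and the Pohozaev identity are all the correct ingredients. But the second bullet, which is the heart of the lemma, is left with a genuine gap. Your plan is to prove $\overline{\Psi}_R\le h$ for the barrier $h(x)=g(|x|-(R-\sqrt2\, I_q))$, yet none of the tools you invoke can deliver this: $h$ is not a minimizer of $J_{B_R}$ in $A_R$, so property (iii) of Proposition \ref{prop1} says nothing about it, and $h$ is not a supersolution precisely because of the curvature term you point out, so the cutting argument of Lemma \ref{pl2} (which rests on the minimality of $\psi_R^\lambda$) does not transfer. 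You correctly sense that the constant $4n$ must be spent absorbing the curvature term, but you never produce the mechanism. The paper's mechanism is different: for each $r\in(I_q,R)$ one builds a competitor $\phi_r$ on $B_r$ equal to $\overline{\Psi}_R$ on $\partial B_r$, obtained by gluing the shifted profile $\beta$ up to the value $\overline{\Psi}_{R,\mathrm{rad}}(r)$ (not up to $q$), which gives $J_{B_r}(\phi_r)\le 2|\SF^{n-1}|\,W_{\mathrm{rad}}(\overline{\Psi}_{R,\mathrm{rad}}(r))\,I_q\, r^{n-1}$; on the other hand the Pohozaev identity \eqref{poz}, written for the radial minimizer, bounds $|\SF^{n-1}|r^n\big(W_{\mathrm{rad}}(\overline{\Psi}_{R,\mathrm{rad}}(r))-\tfrac12|\overline{\Psi}'_{R,\mathrm{rad}}(r)|^2\big)$ above by $nJ_{B_r}(\overline{\Psi}_R)\le nJ_{B_r}(\phi_r)$. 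Comparing the two yields the pointwise inequality $W_{\mathrm{rad}}(\overline{\Psi}_{R,\mathrm{rad}}(r))\le|\overline{\Psi}'_{R,\mathrm{rad}}(r)|^2$ for a.e.\ $r>4nI_q$, and integrating the inverse of the profile then forces the positivity set to have radial width at most $\sqrt2\, I_q$. This ``competitor from above, Pohozaev from below'' step is exactly what is missing from your sketch.

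On the first bullet, two of your assertions fail for the class of potentials considered here. First, $W_{\mathrm{rad}}$ is only nondecreasing and lower semicontinuous, so neither $W_{\mathrm{rad}}'$ nor a convex subdifferential exists, and the manipulation ``multiply $\phi''\ge W_{\mathrm{rad}}'(\phi)$ by $\phi'$'' is unavailable (note also the sign: the curvature term gives $\phi''\le$, not $\ge$, the nonlinearity, and it is the resulting inequality $\tfrac12(\phi')^2\le W_{\mathrm{rad}}(\phi)$ that your argument actually uses). Second, $\phi'$ is in general \emph{not} continuous across the free boundary: in Remark \ref{rem1} the minimizer $2q\ln(\sqrt e\,|x|/R)$ has a nonzero one-sided derivative at $|x|=R/\sqrt e$. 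Your instinct to fall back on the Pohozaev identity is sound and can be made rigorous (for radial $u$ and $n\ge2$ the left-hand side of \eqref{poz} is nonnegative, which gives $\tfrac12|\overline{\Psi}'_{R,\mathrm{rad}}|^2\le W_{\mathrm{rad}}(\overline{\Psi}_{R,\mathrm{rad}})$ a.e.\ directly, and the condition $\phi'(r^*)=0$ is not needed), but be aware that the paper argues differently: it first proves positivity for $n=1$, where Pohozaev reduces to the Hamiltonian conservation \eqref{ham} and a dead core forces $H=0$, and then lifts the conclusion to $n\ge2$ by applying Theorem \ref{th1} to the slab minimizer $\beta_R(x_1)$ and invoking Lemma \ref{pl3a}(c).
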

\begin{proof}
In the case where $I_q:=\int_0^q\frac{\dd s}{\sqrt{2 W_{\mathrm{rad}}(s)}}<\infty$, we define the function
$$\gamma: [0,q]\to [0,I_q], \ \gamma(s)=\int_0^s\frac{1}{\sqrt{2 W_{\mathrm{rad}}}}.$$
Since $\gamma$ is strictly increasing, we denote its inverse function by $\beta:=\gamma^{-1}$, $\beta:  [0,I_q]\to[0,q]$, and it is clear that $s_2-s_1\leq \sqrt{2 W_{\mathrm{rad}}(q)}(\gamma(s_2)-\gamma(s_1))$ holds for $0<s_1\leq s_2\leq q$. Thus, $\beta$ belongs to $W^{1,\infty}(0,I_q)$. In addition, we have
$$\gamma'(s)=\frac{1}{\sqrt{2W_{\mathrm{rad}}(s)}} \text{ for a.e. } s \in (0,q), \text{ and } \beta'(t)=\sqrt{2 W_{\mathrm{rad}}(\beta(t))} \text{ for a.e. } t \in (0,I_q).$$
Next, we consider the restriction of the minimizer $\overline{\Psi}_R$ to the ball $B_r\subset \R^ n$ (with $I_q<r <R$), and setting 
\begin{equation}
\phi_r(x):=\begin{cases}
\beta (|x|-r +\gamma(\overline{\Psi}_R(r))) &\text{ if } r -\gamma(\overline{\Psi}_R(r))\leq |x|\leq r,\\
0 &\text{ if } |x|\leq r -\gamma(\overline{\Psi}_R(r)),
\end{cases}
\end{equation}
we obtain a function $\phi_r \in W^{1,2}(B_r)$ such that $\phi_r=\overline{\Psi}_R$ on $\partial B_r$. A computation shows that
\begin{align*}
J_{B_r}(\phi_r)&=| \SF^{n-1}|\int_0^{\gamma(\overline{\Psi}_R(r)) } (t+r-\gamma(\overline{\Psi}_R(r)))^{n-1}  2W_{\mathrm{rad}}(\beta(t)) \dd t\\
&\leq 2 |\SF^{n-1}| W_{\mathrm{rad}}(\overline{\Psi}_R(r)) I_q r^{n-1}, 
\end{align*}
where $ |\SF^{n-1}|$ denotes the measure of the unit sphere $\SF^{n-1}\subset \R^n$. 
On the other hand, Pohozaev identity \eqref{poz} applied to $\overline{\Psi}_R$ in the ball $B_r$ implies that
\begin{align*}
|\SF^{n-1}| r^n \big(W_{\mathrm{rad}}(\overline{\Psi}_{R,\mathrm{rad}}(r))- \frac{1}{2} |\overline{\Psi}'_{R,\mathrm{rad}}(r)|^2\big)\leq nJ_{B_r}(\overline{\Psi}_R)\leq n J_{B_r}(\phi_r), \text{ for a.e. } r \in (I_q,R),
\end{align*}
where in the last inequality we have used the minimality of $\overline{\Psi}_R$. Therefore, we deduce that
\begin{align*}
\big(W_{\mathrm{rad}}(\overline{\Psi}_{R,\mathrm{rad}}(r))- \frac{1}{2} |\overline{\Psi}'_{R,\mathrm{rad}}(r)|^2\big)\leq  2n I_q r^{-1} W_{\mathrm{rad}}(\overline{\Psi}_{R,\mathrm{rad}}(r)), \text{ for a.e. } r \in (I_q,R).
\end{align*}
In particular, for a.e. $r \in (4n I_q, R)$, we have $W_{\mathrm{rad}}(\overline{\Psi}_{R,\mathrm{rad}}(r))\leq |\overline{\Psi}'_{R,\mathrm{rad}}(r)|^2$. Now, let $(l,R)$ be the intersection of the intervals $(4n I_q, R)$ and $\{ r\in (0,R): \overline{\Psi}_{R,\mathrm{rad}}(r)>0\}$. Since $\overline{\Psi}_{R,\mathrm{rad}}$ is strictly increasing on the interval $(l,R)$, we denote its inverse function by $\chi_R:(\delta,q)\to (l,R)$. Proceeding as previously, we can see that given $0<\epsilon \ll 1$, the function  $\chi_R$ is Lipschitz on $(\delta+\epsilon,q)$, and moreover the inequality $\chi'_R(s)\leq\frac{1}{\sqrt{W_{\mathrm{rad}}(s)}}$ holds for a.e. $s \in (\delta,q)$. As a consequence, it follows that
$$R-r\leq \int_{\overline{\Psi}_{R,\mathrm{rad}}(r)}^q \frac{1}{\sqrt{W_{\mathrm{rad}}}}\leq \sqrt{2} \, I_q, \forall r \in (l,R).$$
This proves that the function $\overline{\Psi}_R$ vanishes in the ball $\overline{B_{R-\sqrt{2}I_q}}$, provided that $R \geq(4n+\sqrt{2})I_q$.

Conversely, we are going to establish that when $n=1$, the existence of a dead core region for $\beta_R:=\underline{\Psi}_R$ implies that $I_q:=\int_0^q\frac{\dd s}{\sqrt{2 W_{\mathrm{rad}}(s)}}\leq R$. In view of Pohozaev identity \eqref{poz}, we have
\begin{align*}
\int_0^r \big(W_{\mathrm{rad}}(\beta_R)- \frac{1}{2} |\beta'_R|^2\big)=r \big(W_{\mathrm{rad}}(\beta_R(r))- \frac{1}{2} |\beta'_R(r)|^2\big) \text{ for a.e. } r \in (0,R),
\end{align*}
which implies that 
\begin{align}\label{ham}
 \frac{1}{2} |\beta'_R(r)|^2- W_{\mathrm{rad}}(\beta_R(r))=H \text{ for a.e. } r \in (0,R),
\end{align}
for some constant $H$\footnote{We point out that \eqref{ham} expresses the conservation of the total mechanical energy for the solutions of the Hamiltonian system $u''(x)=\nabla W(u(x))$. This property still holds for minimizers of \eqref{ene}, in the case of nonsmooth potentials}.  By assumption $\beta$ vanishes on a small interval $[0,\epsilon]$, thus it follows from \eqref{ham}, that actually
\begin{align}\label{ham2}
 \frac{1}{2} |\beta'_R(r)|^2= W_{\mathrm{rad}}(\beta_R(r)) \text{ for a.e. } r \in (0,R).
\end{align}
Let $(l,R]$ be the interval where $\beta_R>0$. Since $\beta_R$ is strictly increasing on the interval $(l,R)$, we denote its inverse function by $\gamma_R:(0,q)\to (l,R)$. As previously, we can see that $\gamma_R$ is locally Lipschitz on $(0,q)$, and that 
$\gamma'_R(s)=\frac{1}{\sqrt{2W_{\mathrm{rad}}(s)}}$ holds for a.e. $s \in (0,q)$. Therefore, we conclude that $I_q:=\int_0^q\frac{\dd s}{\sqrt{2 W_{\mathrm{rad}}(s)}}\leq R$.

So far we have proved that $\int_0^q\frac{\dd s}{\sqrt{2 W_{\mathrm{rad}}(s)}}=\infty$, implies that $\beta_R(r)>0$, for every $R>0$, and $r\in (0,R)$. Actually, the functions $\beta_R$ are positive on the whole interval $[-R,R]$. Indeed, in view of Lemma \ref{pl3a} (c), we have $\beta_R(r)\geq \overline{\Psi}_{2R}(2r)$, $\forall r \in [-R,R]$. Next, Theorem \ref{th1} applied in $\Omega=(-6R,2R)$, with $u(s)=\beta_{4R}(2R+s)$ and $\overline{\Psi}_{2R}(s)$, gives the inequality $\beta_{4R}(2R+s)\leq\overline{\Psi}_{2R}(s)$, $\forall s \in [-2R,2R]$, from which we deduce that $0<\beta_{4R}(2R+2r)\leq\beta_{R}(r)$, $\forall r \in (-R,R)$.
To complete the proof of Lemma \ref{pl3}, it remains to establish that the condition $\int_0^q\frac{\dd s}{\sqrt{2 W_{\mathrm{rad}}(s)}}=\infty$, also implies the positivity of the functions $\underline{\Psi}_R$, in higher dimensions $n \geq 2$. To see this, we  apply Theorem \ref{th1} in $\Omega=(-R,R)^n$, to the minimizer $u(x_1,x_2,\ldots,x_n)=\beta_R(x_1)$, and we get $0<\beta_R(r)\leq \overline{\Psi}_{R,\mathrm{rad}}(r)$, $\forall r\in [0,R]$. Finally, in view of Lemma \ref{pl3a} (c), we conclude that the functions $\underline{\Psi}_R$ are positive.
\end{proof}

\begin{remark}
In view of Lemma \ref{pl3a}, if $\overline{\Psi}_R$ or $\underline{\Psi}_R$ has a dead core, then $\overline{\Psi}_S$ and $\underline{\Psi}_S$ have also a dead core for every $S>R$. As a consequence, assuming that $I_q<\infty$, there exists a critical value $R_0$ such that $\overline{\Psi}_R$ has a dead core for $R>R_0$, while $\underline{\Psi}_R$ does not have a dead core for $R<R_0$. Lemma \ref{pl3} establishes that $R_0\leq(4n+\sqrt{2})I_q$ holds in any dimension $n$. On the other hand, in Lemma \ref{lemad} below, we determine the value of $R_0$, when $n=2$ and $W_{\mathrm{rad}}$ is the characteristic function of $\R\setminus \{0\}$. We refer to \cite[Section 8.4.]{pucci3} for the general theory of dead cores in the \emph{smooth} case, and in particular to \cite[Theorems 8.4.2., 8.4.3., 8.4.4.]{pucci3} for the properties of the function $\overline{\Psi}_R$. In \cite[Section 8.4.]{pucci3}, several explicit examples of dead cores are also provided.
\end{remark}

\begin{proof}[Proof of Proposition \ref{prop2}]
In view of the continuity of $u$ (cf. Lemma \ref{lem2}), we have $u\geq \epsilon$ on $\partial\omega$, for some $\epsilon>0$. Let $B_R$ be a ball containing the domain $\omega$. By increasing $R$, we may assume that the functional $J_{B_R}$ admits a unique minimizer $\Phi$ in the class $B:=\{h\in W^{1,2}(\Omega;\R): h=\epsilon \text{ on } \partial B_R\}$ (cf. Proposition \ref{prop1}, and Lemma \ref{pl3a}). 
In addition, it is clear that $(\Phi-u)^+,(u-\Phi)^-\in W^{1,2}_0(\omega)$. Thus, in view of the minimality of $\Phi$ and $u$, we have on the one hand
\begin{subequations}
\begin{equation}
E_{B_R}(\Phi)\leq E_{B_R}(\Phi-(u-\Phi)^- )\Leftrightarrow E_{\{\Phi>u\}}(\Phi)\leq E_{\{\Phi>u\}}(u),
\end{equation}
and on the other hand
\begin{equation}
E_{\omega}(u)\leq E_{\omega}(u+(\Phi-u)^+ )\Leftrightarrow E_{\{\Phi>u\}}(u)\leq E_{\{\Phi>u\}}(\Phi).
\end{equation}
\end{subequations}
That is, $ E_{\{\Phi>u\}}(\Phi)=E_{\{\Phi>u\}}(u)$, which means that $\Phi-(u-\Phi)^- $ is a minimizer of $J_{B_R}$ in the class $B$. By uniqueness of the minimizer $\Phi$, we conclude that $u\geq \Phi$ on $\omega$. In the case where $W_{\mathrm{rad}}(s)>0$, $\forall s \in (0,q]$, we have seen in Lemma \ref{pl3} that $\Phi>0$. Otherwise, if $W_{\mathrm{rad}}(\eta)=0$, for some $\eta\in (0,q)$, it is straightforward that $\Phi \geq \eta$ (cf. the proof of Lemma \ref{pl1}). 
\end{proof}

Finally, we close this section by detailing the computation mentioned in Remark \ref{rem1}:
\begin{lemma}\label{lemad}
Let $n=2$, and
\begin{equation*}
\tilde W_{\mathrm{rad}}(r)=\begin{cases}
0 &\text{if } r=0,\\
1 &\text{if } r>0.
\end{cases}
\end{equation*}
Then, when $R=R_0:=\sqrt{2e}\, q$, the functional $J_{B_R}$ admits exactly two radial minimizers in the class $A_R$, namely $\overline{\Psi}_R \equiv q$, and
\begin{equation*}
\underline{\Psi}_R(x)=\begin{cases}
0 &\text{if } |x|\leq \sqrt{2}\, q,\\
2q \ln (\frac{|x|}{\sqrt{2}\, q}) &\text{if } \sqrt{2}\, q\leq |x|\leq R_0.
\end{cases}
\end{equation*}
On the other hand, when $R>R_0$ (resp. $R<R_0$), $J_{B_R}$ admits only one radial minimizer in the class $A_R$, namely
\begin{equation*}
\psi_R(x)=\begin{cases}
0 &\text{if }  |x|\leq a_R,\\
q \frac{ \ln (|x|)-\ln a_R}{\ln R- \ln a_R} &\text{if }  |x|\geq a_R,
\end{cases}
\end{equation*}
where $a_R$ is the only solution of $\sqrt{2} \, a\ln(R/a)=q$ in the interval $(\frac{R}{\sqrt{e}},R)$ (resp. $\psi_R\equiv q$). 
\end{lemma}
\begin{proof}
We have seen in Lemma \ref{pl1} that the radial minimizers $\psi_R$ of $J_{B_R}$ in the class $A_{R}$, are such that the function $\psi_{R,\mathrm{rad}}$ is nondecreasing on the interval $[0,R]$. Moreover, due to our specific choice of $\tilde W_{\mathrm{rad}}$, these minimizers $\psi_R$ are harmonic functions on the set $\{x\in B_R:\psi_R(x)>0\}$. Thus, either $\psi_R\equiv q$ and $J_{B_R}(\psi_R)=\pi R^2$, if $\{x\in B_R:\psi_R(x)>0\}=B_R$, or 
\begin{equation*}
\psi_R(x)=\begin{cases}
0 &\text{if } |x|\leq a,\\
q\frac{ \ln (|x|)-\ln a}{\ln R- \ln a} &\text{if } |x|\geq a,
\end{cases}\text{ for some $a\in (0,R)$,}
\end{equation*}
\begin{equation*}
\text{and }J_{B_R}(\psi_R)=\pi(R^2 -a^2)+\frac{\pi q^2}{(\ln R- \ln a)},
\end{equation*}
if $\{x\in B_R:\psi_R(x)>0\}=\{x\in B_R: a<|x|<R\}$. Finally, by studying the variations of the functions $(0,R)\ni a \mapsto g_R(a):=-\pi a^2+\frac{\pi q^2}{(\ln R- \ln a)}$, one can show that $g_R$ vanishes or takes negative values on $(0,R)$, 
iff $R\geq R_0:=\sqrt{2e}\, q$. Moreover, when $R=R_0$, we have $g_R\geq 0$ on $(0,R$), and $g_R$ only vanishes for $a=\sqrt{2}\, q=\frac{R_0}{\sqrt{e}}$. Otherwise, if $R>R_0$, $g_R$ attains its negative minimum at the point $a_R\in(\frac{R}{\sqrt{e}},R)$, solving the equation $\sqrt{2} \, a\ln(R/a)=q$. We also point out that the function $R\mapsto \frac{R}{a_R}$ is decreasing from $(R_0,\infty)$ onto $( 1,\sqrt{e})$.
This completes the proof of Lemma \ref{lemad}.
\end{proof}

\section{Pohozaev identity and continuity for minimizers}\label{sec:sec4}
Pohozaev identity is commonly used for smooth solutions of semilinear elliptic systems (cf. for instance \cite[Remark 3.1]{book}). We prove below that the identity also holds for minimizers of \eqref{ene}. 
\begin{lemma}\label{lem1}
Let $W$ be a nonnegative, bounded and lower semicontinuous function defined on $\overline{B_q}$, and let $u\in W^{1,2}_{\mathrm{loc}}(\Omega;\R^m)$ be a map defined in the domain $\Omega\subset\R^n$, and satisfying \eqref{qbound} as well as  \eqref{enemin}. Then, given a ball $\overline{B_R(x_0)}\subset\Omega$, we have
\begin{equation}\label{poz}
\int_{B_r(x_0)}\Big(\frac{n-2}{2}|\nabla u|^2+n W(u)\Big)=r\int_{\partial B_r(x_0)}\Big(\frac{1}{2}|\nabla u|^2+ W(u)-\Big|\frac{\partial u}{\partial \nu}\Big|^2\Big), \text{ for a.e. } r\in (0,R),
\end{equation}
where $\nu$ stands for the outer nornal to the ball $B_r(x_0)$.
\end{lemma}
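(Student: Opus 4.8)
The plan is to exploit minimality through an \emph{inner variation} of the energy — a deformation of the domain rather than of the values of $u$. Such variations are admissible because they merely rearrange the values of $u$, so the constraint \eqref{qbound} is automatically preserved and, crucially, $W$ (which is only bounded and lower semicontinuous) is never differentiated; this is exactly why the classical PDE derivation, which tests $\Delta u=\nabla W(u)$ against $x\cdot\nabla u$, cannot be used here.

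First I would fix $x_0=0$ and $0<r<R$, and for $X\in C^\infty_c(B_r;\R^n)$ and small $t$ introduce the diffeomorphism $\Psi_t:=\mathrm{id}+tX$ of $\R^n$, which equals the identity outside $B_r$ and hence maps $B_R$ onto itself. Then $u\circ\Psi_t\in W^{1,2}(B_R;\R^m)$, $u\circ\Psi_t-u\in W^{1,2}_0(B_R;\R^m)$, and $\|u\circ\Psi_t\|_{L^\infty(B_R)}=\|u\|_{L^\infty(B_R)}\le q$, so $u\circ\Psi_t$ is an admissible competitor in \eqref{enemin} for both signs of $t$; consequently $t=0$ minimizes $t\mapsto E_{B_R}(u\circ\Psi_t)$.

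Next I would compute the first variation. After the substitution $y=\Psi_t(x)$ the potential term becomes $\int_{B_R}W(u(y))\,|\det D\Psi_t^{-1}(y)|\,\dd y$, where $W$ sits under the fixed measure and only the smooth, uniformly bounded Jacobian depends on $t$; since $W$ is bounded, dominated convergence gives differentiability with $\tfrac{\dd}{\dd t}\big|_{t=0}\int_{B_R}W(u\circ\Psi_t)=-\int_{B_R}W(u)\,\dv X$. The same change of variables turns the Dirichlet term into $\int_{B_R}\tfrac12\sum_k(\nabla u^k)^{T}A_t(y)\,\nabla u^k\,\dd y$ with $A_t$ smooth in $t$, $A_0=I$ and $\partial_tA_t|_{t=0}=DX+(DX)^T-(\dv X)I$, hence also differentiable. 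Equating the derivative to zero yields
\begin{equation}\label{innervar}
\int_{B_R}\Big[\sum_{i,j,k}\partial_iu^k\,\partial_ju^k\,\partial_jX_i-\Big(\tfrac12|\nabla u|^2+W(u)\Big)\dv X\Big]\,\dd x=0,\qquad X\in C^\infty_c(B_r;\R^n).
\end{equation}
Specializing to the cut off dilation field $X(x)=\theta(|x|)\,x$, with $\theta\in C^\infty([0,\infty))$, $0\le\theta\le1$, $\theta\equiv1$ on $[0,r-\delta]$ and $\theta\equiv0$ on $[r,\infty)$, and using $\partial_jX_i=\theta'(|x|)x_ix_j/|x|+\theta(|x|)\delta_{ij}$, $x\cdot\nabla u^k=|x|\,\partial_\nu u^k$ and $\dv X=|x|\theta'(|x|)+n\theta(|x|)$, a routine simplification turns \eqref{innervar}, after passing to polar coordinates on the right, into
\begin{equation}\label{cutoffpoz}
\int_{B_R}\theta(|x|)\Big[\tfrac{n-2}{2}|\nabla u|^2+nW(u)\Big]\,\dd x=\int_0^R s\,\theta'(s)\,g(s)\,\dd s,\quad g(s):=\int_{\partial B_s}\Big[\Big|\tfrac{\partial u}{\partial\nu}\Big|^2-\tfrac12|\nabla u|^2-W(u)\Big]\,\dd\mathcal H^{n-1},
\end{equation}
with $g\in L^1(0,R)$ since $|g(s)|\le\int_{\partial B_s}\big(\tfrac32|\nabla u|^2+W(u)\big)$, $u\in W^{1,2}(B_R)$ and $W(u)\in L^\infty$.

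Finally I would let $\theta=\theta_\delta$ with $-\theta_\delta'=\mu_\delta$, $\mu_\delta(s)=\tfrac1\delta\phi\big(\tfrac{r-s}{\delta}\big)$ for a fixed $\phi\in C^\infty_c((0,1))$ with $\phi\ge0$ and $\int\phi=1$. The left side of \eqref{cutoffpoz} converges by dominated convergence to $\int_{B_r}\big[\tfrac{n-2}{2}|\nabla u|^2+nW(u)\big]$, while its right side equals $-\int_{r-\delta}^r s\,\mu_\delta(s)g(s)\,\dd s$, which, since $\|\mu_\delta\|_\infty\le C/\delta$ and $s\mapsto s\,g(s)\in L^1(0,R)$, converges to $-r\,g(r)$ for a.e.\ $r\in(0,R)$ by the Lebesgue differentiation theorem; this is exactly \eqref{poz}. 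The main obstacle is the conceptual one already noted — coping with a non-differentiable $W$ — which the inner variation resolves because the whole $t$-dependence is carried by smooth Jacobians; the only residual cost is that \eqref{poz} holds only for a.e.\ $r$, forced by the Lebesgue differentiation step, everything else being an exact identity.
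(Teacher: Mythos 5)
Your argument is correct and rests on the same idea as the paper's proof: an inner variation (precomposition of $u$ with a deformation of the domain), which preserves the constraint $|u|\le q$ and never differentiates the merely lower semicontinuous $W$. The paper implements this with an explicit one-parameter radial rescaling (dilation by $\kappa$ on $B_{\kappa r}$, radial interpolation on the annulus $\kappa r\le |x|\le s$, then the condition $f'(1)=0$ followed by $s\to r$), whereas you derive the general stationarity identity for arbitrary $X\in C^\infty_c(B_r;\R^n)$ and then specialize to a cut-off radial field; the two computations are equivalent, and both yield the identity only for a.e.\ $r$ via a Lebesgue differentiation step.
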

\begin{proof}
Without loss of generality, we may assume that $x_0=0$. Let $r\in (0,R)$, and $s \in (r, R)$ be fixed.  Given $x\in B_s\setminus \{0\}$, we write $x=t \sigma$, with $t:=|x|$, and $\sigma \in \SF^{n-1}$. Moreover, we set $|u_t(t,\sigma)|^2:=|\frac{\partial u}{\partial t}(t,\sigma)|^2$, and $|\nabla _\sigma u(t,\sigma)|^2=|\nabla u(t,\sigma)|^2- |u_t(t,\sigma)|^2$. Next, we consider in $B_s$, the comparison map
\begin{equation}
\tilde u(x)=\tilde u (t,\sigma)=\begin{cases}
u(\frac{x}{\kappa}) &\text{ when } 0\leq t=|x|\leq \kappa r,\\
u(r+(s-r)\frac{t-\kappa r}{s-\kappa r},\sigma) &\text{ when } \kappa r\leq t\leq s,
\end{cases}
\end{equation}
where $\kappa\in (0,\frac{s}{r})$ is fixed. It is clear that $\tilde u= u$ on $\partial B_s$, thus by the minimality of $u$, we have 
\begin{equation}\label{ineq22}
E_{B_s}(\tilde u)-E_{B_s}(u)\geq 0, \forall \kappa \in \big(0,\frac{s}{r}\big).
\end{equation}
Setting $f(\kappa):=E_{B_s}(\tilde u)$, a long but otherwise trivial computation shows that 
\begin{align*}
f(\kappa)&=\int_{B_r}\Big[\frac{\kappa^{n-2}}{2}|\nabla u|^2+\kappa^n W(u)\Big]\\
&+\int_r^s\int_{\SF^{n-1}}\big((s-\kappa r)\frac{t-r}{s-r}+\kappa r\big)^{n-1}\frac{s-r}{s-\kappa r}\frac{|u_t(t,\sigma)|^2}{2}\dd t \dd \sigma\\
&+\int_r^s\int_{\SF^{n-1}}\big((s-\kappa r)\frac{t-r}{s-r}+\kappa r\big)^{n-3} t^2\frac{s-\kappa r}{s- r}\frac{|\nabla_\sigma u_t(t,\sigma)|^2}{2}\dd t \dd \sigma \\
&+
\int_r^s\int_{\SF^{n-1}}\big((s-\kappa r)\frac{t-r}{s-r}+\kappa r\big)^{n-1}\frac{s-\kappa r}{s-r}W(u(t,\sigma))\dd t \dd \sigma,
\end{align*}
and
\begin{multline}\label{penult}
f'(1)=\int_{B_r}\Big[\frac{n-2}{2}|\nabla u|^2+n W(u)\Big]+
\frac{r}{s-r}\int_{B_s\setminus B_r}\big((n-1)\frac{s}{t}-(n-2)\big)\frac{|u_t(t,\sigma)|^2}{2}\\
+\frac{r}{s-r}\int_{B_s\setminus B_r}\big((n-3)\frac{s}{t}-(n-2)\big)\frac{|\nabla_\sigma u_t(t,\sigma)|^2}{2}
+\frac{r}{s-r}
\int_{B_s\setminus B_r}\big((n-1)\frac{s}{t}-n)\big)W(u(t,\sigma)).
\end{multline}
Therefore, in view of \eqref{ineq22}, and since $f(1)=E_{B_s}(u)$, we deduce that $f'(1)=0$. Finally, letting $s\to r$ in \eqref{penult}, we obtain for a.e. $r\in (0,R)$:
\begin{equation*}
0=\int_{B_r}\Big[\frac{n-2}{2}|\nabla u|^2+n W(u)\Big]+r
\int_{\partial B_r}\big(\frac{|u_t|^2}{2}-\frac{|\nabla_\sigma u|^2}{2}-W(u)\big).
\end{equation*}
\end{proof}
\begin{remark}
Proceeding as in \cite[page 91]{book}, one can also derive from Pohozaev identity the monotonicity formula $\frac{\dd }{\dd r} (r^{-(n-2)}E_{B_r(x_0)}(u))\geq 0$, holding for a.e. $r\in (0,R)$, under the assumptions of Lemma \ref{lem1}. We refer to the expository papers \cite{evans,lussardi} for a detailed account of monotonicity formulae.
\end{remark}

Next, we recall the continuity of bounded minimizers of \eqref{ene}. This property is crucial in the proof of Theorem \ref{th1}. 
\begin{lemma}\label{lem2}
Let $W$ be a nonnegative, bounded and lower semicontinuous function defined on $\overline{B_q}$, and let $u\in W^{1,2}_{\mathrm{loc}}(\Omega;\R^m)$ be a map satisfying \eqref{qbound} and \eqref{enemin}. Then, $u$ is continuous in $\Omega$.
\end{lemma}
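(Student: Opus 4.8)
The plan is to show that $u$ is locally Hölder continuous by a standard De Giorgi--type argument adapted to the vectorial setting, exploiting that the potential term only contributes a bounded, and in fact favourable, perturbation. Fix a ball $B_{2r}(x_0)\subset\Omega$. The key structural observation is that $u$ is a \emph{quasi-minimizer} of the Dirichlet energy: for every $v=u+\xi$ with $\xi\in W^{1,2}_0(B_\rho(y);\R^m)$ and $\|v\|_{L^\infty}\le q$, inequality \eqref{enemin} gives
\begin{equation*}
\int_{B_\rho(y)}|\nabla u|^2\le \int_{B_\rho(y)}|\nabla v|^2+2\int_{B_\rho(y)}\big(W(v)-W(u)\big)\le \int_{B_\rho(y)}|\nabla v|^2+2\|W\|_{L^\infty}|B_\rho(y)|,
\end{equation*}
so $u$ is an $\omega$-minimizer of $\int|\nabla\cdot|^2$ with $\omega(\rho)=C\rho^n$, i.e. a quasi-minimizer in the sense of Giaquinta--Giusti. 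Since the truncation $v=u+\xi$ with $\xi$ a suitable inward radial cut-off still satisfies $\|v\|_{L^\infty}\le q$ (one truncates toward the sphere value, or more carefully toward a constant lying in $\overline{B_q}$, keeping the modulus bounded by $q$), the admissibility constraint \eqref{qbound} does not obstruct the choice of the usual test functions.

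First I would record the Caccioppoli inequality that follows from the quasi-minimality: for concentric balls $B_s(y)\subset B_t(y)\subset B_{2r}(x_0)$ and any constant vector $c\in\R^m$, testing with $\xi=-\eta^2(u-c)$ (truncated to respect \eqref{qbound}, which is automatic when $c\in\overline{B_q}$ since then $u-\eta^2(u-c)$ is a convex combination of $u$ and $c$, hence has modulus $\le q$) yields
\begin{equation*}
\int_{B_s(y)}|\nabla u|^2\le \frac{C}{(t-s)^2}\int_{B_t(y)}|u-c|^2+C\,|B_t(y)|.
\end{equation*}
Combined with the Sobolev--Poincaré inequality, this gives a reverse Hölder / higher-integrability estimate (Gehring's lemma), and more to the point it places $u$ in the De Giorgi class: each scalar component, or the modulus $|u-c|$, satisfies the energy bounds on level sets that the De Giorgi iteration requires. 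One then runs the standard oscillation-decay lemma: there is $\theta\in(0,1)$ with $\operatorname{osc}_{B_{\rho/2}(y)}u\le\theta\,\operatorname{osc}_{B_\rho(y)}u+C\rho^{\alpha}$ for all small $\rho$, which iterates to local Hölder continuity, hence continuity in $\Omega$.

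The main obstacle — and the one place to be careful rather than invoke black boxes — is verifying that the $L^\infty$ constraint \eqref{qbound} genuinely permits all the competitors needed by the De Giorgi machinery, since na\"ive truncations $u\mapsto c+\lambda(u-c)$ are safe only for centers $c\in\overline{B_q}$ and $\lambda\in[0,1]$; one must check that the truncations onto super- and sub-level sets used in the iteration can be arranged as such convex combinations (pushing values toward an admissible constant), so that the constrained minimality still controls the Dirichlet energy of the truncated function. Once that bookkeeping is done, the bounded lower-semicontinuous potential $W$ enters only through the harmless additive term $C\rho^n$, and the classical quasi-minimizer regularity theory (see Giaquinta--Giusti, or the treatment in \cite{savin} for nonsmooth potentials) applies verbatim to conclude. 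Alternatively, and perhaps more cleanly, one may cite that bounded minimizers of \eqref{ene} with $W\ge 0$ lower semicontinuous and bounded are quasi-minimizers of the Dirichlet integral and hence continuous by the results already available in the literature (\cite{savin,alikakos2}); I would present the Caccioppoli estimate explicitly and then invoke this.
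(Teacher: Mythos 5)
Your reduction is the right one and is essentially how this kind of statement is proved, but note that the paper does not prove Lemma \ref{lem2} at all: it simply cites \cite[Lemma 2.1]{agz}, where a logarithmic estimate for the local minimizer is established, yielding H\"older continuity. So your proposal is a self-contained substitute rather than a variant of the paper's argument. The core observation --- that for any admissible competitor $v=u+\xi$ one has $\int|\nabla u|^2\le\int|\nabla v|^2+2\|W\|_{L^\infty}|B_\rho|$, so that $u$ is an $\omega$-minimizer of the Dirichlet integral with $\omega(\rho)=C\rho^n$ --- is correct and is exactly what makes the bounded, nonnegative, merely lower semicontinuous $W$ harmless. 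Your handling of the constraint via convex combinations $(1-\eta^2)u+\eta^2c$ with $c\in\overline{B_q}$ is also correct and gives the Caccioppoli inequality.

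The one soft spot is the appeal to De Giorgi classes and level-set iteration. That machinery is scalar in nature; for a system, quasi-minimality of the vector $u$ does not hand you the level-set energy inequalities for each component or for $|u-c|$ without further argument, and the truncations needed at levels of arbitrary sign are not all of the convex-combination form you describe (truncating one component at a level $k$ between $\inf u^i<0<\sup u^i$ need not decrease $|u|$). The clean way to finish from your quasi-minimality reduction is the harmonic-comparison (Campanato/Morrey) route: on $B_\rho(y)$ replace $u$ by its componentwise harmonic extension $h$ of the boundary values; since $|h|$ is subharmonic, $\|h\|_{L^\infty(B_\rho)}\le q$, so $h$ is an admissible competitor and the constraint \eqref{qbound} causes no trouble. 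Quasi-minimality then gives $\int_{B_\rho}|\nabla(u-h)|^2\le C\rho^n$, which combined with the decay $\int_{B_s}|\nabla h|^2\le C(s/\rho)^n\int_{B_\rho}|\nabla h|^2$ and the standard iteration lemma yields $\int_{B_s}|\nabla u|^2\le Cs^{n-2+2\alpha}$ and hence local H\"older continuity by Morrey's lemma. With that substitution your argument is complete; as written, the De Giorgi step is the only genuine gap.
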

\begin{proof}
We refer to \cite[Lemma 2.1]{agz} where a logarithmic estimate is established for the local minimizer $u$, implying in particular its H{\"o}lder continuity.
\end{proof}

\section*{Acknowledgments}
This research is supported by REA - Research Executive Agency - Marie Sk{\l}odowska-Curie Program (Individual Fellowship 2018) under Grant No. 832332, by the Basque Government through the BERC 2018-2021 program, by the Ministry of Science, Innovation and Universities: BCAM Severo Ochoa accreditation SEV-2017-0718, by project MTM2017-82184- R funded by (AEI/FEDER, UE) and acronym ``DESFLU''.

\end{document}